\newcommand{\home}{}
\newcommand{\refeq}[1]{{\rm (\ref{#1})}}
\newlength{\longeqmarginwidth}
\newlength{\longeqwidth}
\newlength{\longeqskiplength}
\newcommand{\longeqskip}{\hspace{\longeqskiplength}}
\newcommand{\longeq}[1]{
\settowidth{\longeqmarginwidth}{{\Large \{~\}}~(\theequation)}
\setlength{\longeqwidth}{\textwidth}
\addtolength{\longeqwidth}{-\longeqmarginwidth}
\left. \parbox{\longeqwidth}{\begin{eqnarray*} #1
\end{eqnarray*}} \right\}}
\newcommand{\Vzero}[2]{\mathord{\stackrel{\textstyle\kern1pt\circ}
{\smash V\vbox to6pt{}}}\vphantom{V}_{#1}^{#2}}
\newcommand{\function}[2]{:#1 \longrightarrow #2}
\newcommand{\of}[1]{\left( #1 \right)}
\newcommand{\set}[2]{\left\{\hspace{0.2ex} #1 \left|\: #2
\right. \right\}}
\newcommand{\df}{\stackrel{\rm def}{=}}
\newcommand{\im}{{\rm im}}
\newcommand{\Hom}{{\rm Hom}}
\newcommand{\rn}{\boldsymbol}
\newcommand{\scr}{\mathcal}
\newcounter{operator}
\newcommand{\edge}[2]{\langle #1,#2 \rangle}
\newcommand{\od}{d^+}
\title{On the Caccetta-H\"aggkvist Conjecture with Forbidden Subgraphs}
\author{Alexander A. Razborov\thanks{University of Chicago, {\tt razborov@cs.uchicago.edu}. Part of this work was done while the
author was at
Steklov Mathematical Institute, supported by the Russian Foundation
for Basic Research, and at Toyota Technological Institute, Chicago.}}
\begin{document}
\maketitle

\begin{abstract}
The Caccetta-H\"aggkvist conjecture made in 1978 asserts that every oriented graph on $n$ vertices without oriented cycles of
length $\leq\ell$ must contain a vertex of outdegree at most $\frac{n-1}{\ell}$. It has a rather elaborate set of
(conjectured) extremal configurations.

In this paper we consider the case $\ell=3$ that received quite a significant attention in the literature. We
identify three oriented graphs on four vertices each that are missing as an induced subgraph in all known extremal examples
and prove the Caccetta-H\"aggkvist conjecture for oriented graphs missing as induced subgraphs any of these oriented graphs, along with $\vec C_3$.
Using a standard trick, we can also lift the restriction of being induced, although this makes graphs in our list
slightly more complicated.
\end{abstract}

\section{Introduction} \label{intro}

One prominent way to attack a difficult problem in extremal combinatorics is by better understanding the nature of its
(conjectured) extremal configurations. What one would hope for is to find some property $P$, as ``natural'' as possible
that is shared by all known extremal configurations, and then solve the extremal problem in question for all
configurations possessing this property $P$. Arguably but conceivably, this may shed some light on the nature of
difficulties surrounding the problem in question and perhaps even open up a possibility to solve the problem by
gradually lifting constraints defining the property $P$. For the famous Turan's (3,4)-problem this approach was
recently undertaken by the author in \cite{turan,fdf}; another good example of this sort is the recent solution of the
local Sidorenko conjecture by Lov\'asz \cite{Lov3}.

\medskip
In this paper we address along these lines another major open problem in the area, Caccetta-H\"aggkvist conjecture, that is nearly
as famous as those mentioned above. Recall that we are given an oriented graph\footnote{ That is, a digraph without loops, parallel or
anti-parallel edges. By analogy with the abbreviation ``digraph'',  in this paper oriented graphs  will be often called {\em orgraphs}.} $G$ on $n$ vertices that does not contain (oriented) cycles of length $\leq\ell$ or, in other words,
has girth $\geq\ell+1$. Behzad, Chartrand and Wall \cite{BCWa} asked the following question: if $G$ is additionally
known to be bi-regular, how large can be its degree? They conjectured that the answer is $\lfloor
\frac{n-1}{\ell}\rfloor$ and presented a simple construction attaining this bound. Eight years later, Caccetta and
H\"aggkvist \cite{CaH} proposed to lift in this conjecture the restriction of bi-regularity and, moreover, restrict
attention to minimal {\em outdegree} only. In other words, they asked if every orgraph without oriented
cycles of length $\leq\ell$ must contain a vertex of out-degree $\leq \frac{n-1}{\ell}$, and it is this question that
became known as the {\em Caccetta-H\"aggkvist conjecture}. It turned out to be notoriously difficult, too.

The case of higher values of $\ell$ was studied in \cite{ChS2,Ham,HoR,Nish,Shen,Shen2}.

In this paper we concentrate on the case $\ell=3$, as much of the previous work in this area did. Let $c$ be the
minimal constant for which the asymptotic upper bound $(c+o(1))n$ on the minimal outdegree in $\vec C_3$-free orgraphs
holds. Caccetta and H\"aggkvist themselves proved in \cite{CaH} that $c\leq \frac{3-\sqrt 5}{2}\approx 0.382$. This was
improved in the series of papers \cite{Bon,Shen3,HHKo} to the current record of $c\leq 0.3465n$ \cite{HKN}.

As we already noticed, the first example of an orgraph $G$ on $n$ vertices without copies of $\vec C_3$ and minimal degree
$\lfloor \frac{n-1}{3} \rfloor$ was given in the paper \cite{BCWa}. It is quite simple: assuming that $n=3h+1$ for some integer $h$, we let ${\Bbb Z}_{3h+1}$ be the set of vertices, and we connect $i$ to $j$ if and only if $j-i\leq h\mod (3h+1)$. But this example is not unique:
Bondy \cite[Proposition 1]{Bon} observed that the class of orgraphs with the above properties is closed under lexicographic product, which leads to many more non-isomorphic extremal examples for the Caccetta-H\"aggkvist conjecture.

\bigskip
The first (minor) contribution of our paper (Section \ref{description}) consists in a slight generalization of Bondy's construction
which results in what we believe to be the complete set of currently known
conjectured extremal configurations.

All these examples (for the case $\ell=3$) have the property that they are missing (as induced subgraphs) the three
orgraphs shown on Figure \ref{forbidden} (cf. \cite[Proposition 2]{Bon}).
\begin{figure}[tb]
\begin{center}
\setlength{\unitlength}{0.254mm}
\begin{picture}(462,130)(0,-138)
        \special{color rgb 0 0 0}\allinethickness{0.254mm}\special{sh 0.99}\put(10,-60){\ellipse{4}{4}} 
        \special{color rgb 0 0 0}\allinethickness{0.254mm}\special{sh 0.99}\put(60,-10){\ellipse{4}{4}} 
        \special{color rgb 0 0 0}\put(0,-136){\shortstack{\sc Twisted Circle}} 
        \special{color rgb 0 0 0}\allinethickness{0.254mm}\put(60,-10){\vector(1,-1){50}} 
        \special{color rgb 0 0 0}\allinethickness{0.254mm}\put(10,-60){\vector(1,1){50}} 
        \special{color rgb 0 0 0}\allinethickness{0.254mm}\put(10,-60){\vector(1,-1){50}} 
        \special{color rgb 0 0 0}\allinethickness{0.254mm}\put(110,-60){\vector(-1,-1){50}} 
        \special{color rgb 0 0 0}\allinethickness{0.254mm}\special{sh 0.99}\put(110,-60){\ellipse{4}{4}} 
        \special{color rgb 0 0 0}\allinethickness{0.254mm}\special{sh 0.99}\put(60,-110){\ellipse{4}{4}} 
        \special{color rgb 0 0 0}\allinethickness{0.254mm}\special{sh 0.99}\put(185,-10){\ellipse{4}{4}} 
        \special{color rgb 0 0 0}\allinethickness{0.254mm}\special{sh 0.99}\put(235,-60){\ellipse{4}{4}} 
        \special{color rgb 0 0 0}\allinethickness{0.254mm}\put(185,-10){\vector(1,-1){50}} 
        \special{color rgb 0 0 0}\allinethickness{0.254mm}\special{sh 0.99}\put(285,-10){\ellipse{4}{4}} 
        \special{color rgb 0 0 0}\allinethickness{0.254mm}\special{sh 0.99}\put(235,-110){\ellipse{4}{4}} 
        \special{color rgb 0 0 0}\allinethickness{0.254mm}\put(185,-10){\vector(1,0){100}} 
        \special{color rgb 0 0 0}\allinethickness{0.254mm}\put(235,-60){\vector(1,1){50}} 
        \special{color rgb 0 0 0}\allinethickness{0.254mm}\put(235,-110){\vector(0,1){50}} 
        \special{color rgb 0 0 0}\put(190,-136){\shortstack{\sc In-Pendant}} 
        \special{color rgb 0 0 0}\allinethickness{0.254mm}\special{sh 0.99}\put(360,-10){\ellipse{4}{4}} 
        \special{color rgb 0 0 0}\allinethickness{0.254mm}\special{sh 0.99}\put(410,-60){\ellipse{4}{4}} 
        \special{color rgb 0 0 0}\allinethickness{0.254mm}\put(360,-10){\vector(1,-1){50}} 
        \special{color rgb 0 0 0}\allinethickness{0.254mm}\special{sh 0.99}\put(460,-10){\ellipse{4}{4}} 
        \special{color rgb 0 0 0}\allinethickness{0.254mm}\special{sh 0.99}\put(410,-110){\ellipse{4}{4}} 
        \special{color rgb 0 0 0}\allinethickness{0.254mm}\put(360,-10){\vector(1,0){100}} 
        \special{color rgb 0 0 0}\allinethickness{0.254mm}\put(410,-60){\vector(1,1){50}} 
        \special{color rgb 0 0 0}\allinethickness{0.254mm}\put(410,-60){\vector(0,-1){50}} 
        \special{color rgb 0 0 0}\put(360,-136){\shortstack{\sc Out-Pendant}} 
        \special{color rgb 0 0 0} 
\end{picture}
\caption{\label{forbidden} Forbidden orgraphs.}
\end{center}
\end{figure}
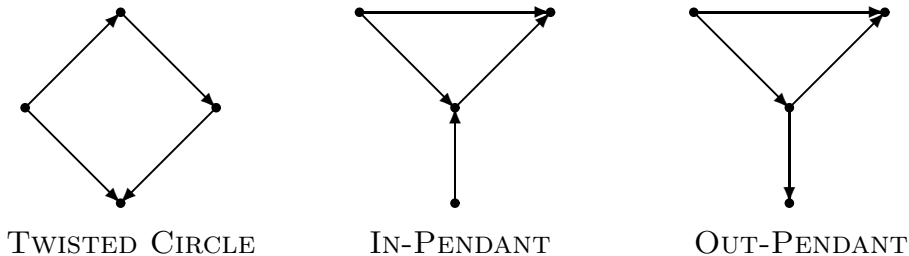
As our main result, we prove the CH-conjecture (for $\ell=3$) for any $\vec C_3$-free orgraph with this additional
property (Theorem \ref{main}). While this is the first result of this kind pertaining to {\em all} known extremal configurations, we would like to mention some previous (unpublished) work regarding forbidden orgraphs on 3 vertices that are missing in the original ``cyclic'' configuration by Behzad, Chartrand and Wall. On Figure \ref{stars}, these are represented by $I_3, \vec K_{1,2}$ and $\vec K_{2,1}$.
\begin{figure}[tb]
\begin{center}
\setlength{\unitlength}{0.254mm}
\begin{picture}(488,80)(13,-93)
        \special{color rgb 0 0 0}\allinethickness{0.254mm}\special{sh 0.99}\put(15,-65){\ellipse{4}{4}} 
        \special{color rgb 0 0 0}\allinethickness{0.254mm}\special{sh 0.99}\put(40,-15){\ellipse{4}{4}} 
        \special{color rgb 0 0 0}\allinethickness{0.254mm}\special{sh 0.99}\put(65,-65){\ellipse{4}{4}} 
        \special{color rgb 0 0 0}\allinethickness{0.254mm}\special{sh 0.99}\put(115,-65){\ellipse{4}{4}} 
        \special{color rgb 0 0 0}\allinethickness{0.254mm}\special{sh 0.99}\put(140,-15){\ellipse{4}{4}} 
        \special{color rgb 0 0 0}\allinethickness{0.254mm}\special{sh 0.99}\put(165,-65){\ellipse{4}{4}} 
        \special{color rgb 0 0 0}\put(35,-91){\shortstack{$I_3$}} 
        \special{color rgb 0 0 0}\put(130,-91){\shortstack{$\vec K_{1,2}$}} 
        \special{color rgb 0 0 0}\allinethickness{0.254mm}\put(140,-15){\vector(-1,-2){25}} 
        \special{color rgb 0 0 0}\allinethickness{0.254mm}\put(140,-15){\vector(1,-2){25}} 
        \special{color rgb 0 0 0}\allinethickness{0.254mm}\special{sh 0.99}\put(415,-65){\ellipse{4}{4}} 
        \special{color rgb 0 0 0}\allinethickness{0.254mm}\special{sh 0.99}\put(440,-15){\ellipse{4}{4}} 
        \special{color rgb 0 0 0}\allinethickness{0.254mm}\special{sh 0.99}\put(465,-65){\ellipse{4}{4}} 
        \special{color rgb 0 0 0}\put(435,-91){\shortstack{$\vec C_3$}} 
        \special{color rgb 0 0 0}\allinethickness{0.254mm}\put(440,-15){\vector(1,-2){25}} 
        \special{color rgb 0 0 0}\allinethickness{0.254mm}\put(415,-65){\vector(1,2){25}} 
        \special{color rgb 0 0 0}\allinethickness{0.254mm}\special{sh 0.99}\put(315,-65){\ellipse{4}{4}} 
        \special{color rgb 0 0 0}\allinethickness{0.254mm}\special{sh 0.99}\put(340,-15){\ellipse{4}{4}} 
        \special{color rgb 0 0 0}\allinethickness{0.254mm}\special{sh 0.99}\put(365,-65){\ellipse{4}{4}} 
        \special{color rgb 0 0 0}\put(335,-91){\shortstack{$\vec P_3$}} 
        \special{color rgb 0 0 0}\allinethickness{0.254mm}\put(340,-15){\vector(1,-2){25}} 
        \special{color rgb 0 0 0}\allinethickness{0.254mm}\put(315,-65){\vector(1,2){25}} 
        \special{color rgb 0 0 0}\allinethickness{0.254mm}\put(465,-65){\vector(-1,0){50}} 
        \special{color rgb 0 0 0}\allinethickness{0.254mm}\special{sh 0.99}\put(215,-65){\ellipse{4}{4}} 
        \special{color rgb 0 0 0}\allinethickness{0.254mm}\special{sh 0.99}\put(240,-15){\ellipse{4}{4}} 
        \special{color rgb 0 0 0}\allinethickness{0.254mm}\special{sh 0.99}\put(265,-65){\ellipse{4}{4}} 
        \special{color rgb 0 0 0}\put(230,-91){\shortstack{$\vec K_{2,1}$}} 
        \special{color rgb 0 0 0}\allinethickness{0.254mm}\put(215,-65){\vector(1,2){25}} 
        \special{color rgb 0 0 0}\allinethickness{0.254mm}\put(265,-65){\vector(-1,2){25}} 
        \special{color rgb 0 0 0} 
\end{picture}
\caption{\label{stars} Some orgraphs on 3 vertices.}
\end{center}
\end{figure}
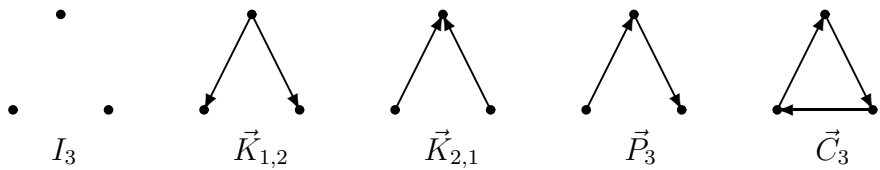

The CH-conjecture (as always, for $\ell=3$) is an easy exercise for orgraphs missing $\vec C_3$ and $\vec K_{1,2}$ as induced subgraphs. Under the additional assumption of out-regularity, Chudnovsky and Seymour \cite{ChSe} did the case when $\vec C_3$ and $I_3$ are missing;  to the best of our knowledge, the question is still open without the restriction of out-regularity. Seymour \cite{Sey}  proved the CH-conjecture for orgraphs missing $\vec C_3$ and $\vec K_{2,1}$ (which is substantially more difficult than the dual case of $\vec C_3$ and $\vec K_{1,2}$).

Potential usefulness of Theorem \ref{main} (at least, of the sort we can think of) as stated above is undermined by the fact
that it involves the notion of an {\em induced} subgraph. We include a very simple observation (Theorem
\ref{supplementary}) showing that this restriction can be removed at the expense of the forbidden family becoming
slightly more complicated (see Figure \ref{not_induced}).

The proof of Theorem \ref{main} was found mostly in the framework of flag algebras \cite{flag}. But the final
calculation (see the crucial Claim \ref{crucial}) does not use multiplicative structure (and, in particular, does not
use Cauchy-Schwarz inequality). This makes working in the limit framework unnecessary, and in this paper we adopt a compromise
approach. Namely, we exclusively work with finite objects but still use basic elements of the whole apparatus of flag
algebras that in our case boils down to two conventions:
\begin{itemize}
\item systematic and consistent notation for various sets based upon types and flags

\item systematic measurement of all necessary quantities in terms of their ``densities'' rather than absolute size.
\end{itemize}

We would like to note that even with this compromise approach lower-order terms do begin to accumulate in the proof of
Claim \ref{crucial}, and we can not simply dismiss them due to the inductive nature of the argument. Fortunately, the
proof is over before they become a real nuisance.

\section{Extremal configurations} \label{description}

We let $[n]\df \{1,2,\ldots,n\}$.

An {\em oriented graph}, or an {\em orgraph}, is a directed graph without loops and such that every pair of vertices is
connected by at most one edge, regardless of direction. $V(\Gamma)$ is the set of vertices of an orgraph $\Gamma$, and
$E(\Gamma)$ is its set of edges. For an edge $\edge vw$, $w$ is its {\em head} and $v$ is its {\em tail}.
$\od_\Gamma(v)$ is the out-degree of the vertex $v$. Vertices $v$ and $w$ are {\em independent} in $\Gamma$ if
neither $\edge vw$ nor $\edge wv$ is an edge. For an orgraph $\Gamma$ and $V\subseteq V(\Gamma)$, $\Gamma|_V$ is the
orgraph induced by $V$.

$\vec C_3$ is the cycle on 3 vertices, and an orgraph $\Gamma$ is {\em $\vec C_3$-free} if it does not contain copies
of $\vec C_3$.

\begin{conjecture}[Caccetta-H\"aggkvist Conjecture] \label{ch}
Any $\vec C_3$-free orgraph $\Gamma$ on $n$ vertices contains a vertex $v$ with $\od_\Gamma(v)\leq
\frac{n-1}{3}$.
\end{conjecture}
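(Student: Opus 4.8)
The displayed statement is the full Caccetta--H\"aggkvist conjecture for $\ell=3$, which is wide open; accordingly, the realistic plan is not to prove it outright but to carry out the ``property $P$'' program described in the introduction, whose best current instantiation is the conditional Theorem \ref{main}. The idea is to isolate a structural feature shared by every known tight example and to prove the conjecture for all $\vec C_3$-free orgraphs possessing that feature, with the long-term hope of relaxing the feature toward vacuity.

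First I would fix the conjectured extremal configurations: starting from the cyclic example of Behzad--Chartrand--Wall on ${\Bbb Z}_{3h+1}$, closing under lexicographic product following Bondy, and applying the mild generalization of Section \ref{description}, one arrives at what is presumably the complete list. Next I would search this list for a robust forbidden pattern, and (as in \cite[Proposition 2]{Bon}) one finds that none of these orgraphs contains, as an \emph{induced} subgraph, any of Twisted Circle, In-Pendant, or Out-Pendant from Figure \ref{forbidden}. This fixes the candidate property $P$: ``$\vec C_3$-free and missing all three of Figure \ref{forbidden} as induced subgraphs''. The substantive step is then to establish CH for every orgraph with property $P$ --- this is Theorem \ref{main} --- which I would do by a density calculation in the spirit of flag algebras, organizing all quantities as densities of small flags and closing the argument with the inequality of Claim \ref{crucial}; the notable point is that this final inequality uses no multiplicative (Cauchy--Schwarz) structure, so the whole computation can be kept over finite objects rather than limits. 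For an unconditional bound one would instead feed a large semidefinite program into the flag-algebra machine, continuing the line \cite{CaH,Bon,Shen3,HHKo,HKN} that has brought the constant $c$ down to $0.3465$; reaching $c\le 1/3$ this way, however, seems to require more than plain Cauchy--Schwarz certificates.

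The hard part --- and the reason the displayed statement stays open --- is the final, aspirational step of the program: peeling off the three forbidden subgraphs (and then whatever further restrictions intervene) until only $\vec C_3$-freeness remains. This step is essentially as hard as the conjecture itself, and it rests on assumptions that are not guaranteed: there is no proof that the conjectured list of extremal configurations is complete, nor that the induced-subgraph property above is the ``right'' one to relax; a single overlooked tight example containing, say, an induced Out-Pendant would already derail the most optimistic form of the plan. The honest deliverable is therefore the conditional Theorem \ref{main}, together with its non-induced counterpart Theorem \ref{supplementary}, while Conjecture \ref{ch} remains unproved.
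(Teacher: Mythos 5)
Your proposal correctly recognizes that the statement is the open Caccetta--H\"aggkvist conjecture itself, which the paper does not (and cannot) prove, and your description of the conditional program --- property $P$ extracted from the known extremal configurations, Theorem \ref{main} proved via the flag-style density argument culminating in Claim \ref{crucial}, plus Theorem \ref{supplementary} for the non-induced version --- matches exactly what the paper actually delivers. Nothing is missing given that no unconditional proof exists.
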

We will sometimes refer to this as to ``the CH-conjecture''.

In this section we review what we believe to be the complete list of known configurations attaining this value; as we
noted in Introduction, this is a rather straightforward generalization of the examples found in \cite{BCWa,Bon}. There
are two legitimate frameworks in which this question can be addressed; one of them is exact (i.e., describing finite
orgraphs precisely matching the bound in Conjecture \ref{ch} precisely). And another is asymptotic: it can be best described in
terms of (or)graphons \cite{LoSz} or flag algebras \cite{flag}, but intuitively it corresponds to ``convergent''
sequences of orgraphs $\{\Gamma_m\}$ with $\min_v \od_{\Gamma_m}(v)\geq \of{\frac 13-o(1)}|V(\Gamma_m)|$. We treat them
simultaneously.

Let $S^1$ be the unit circle, and define the (infinite) orgraph $\Gamma_0$ with $V(\Gamma_0)\df S^1$ and
$E(\Gamma_0)\df \set{\edge xy}{y-x<1/3 \mod 1}$. Note that $\Gamma_0$ is $\vec C_3$-free. Let $\Omega\df (S^1)^\infty$
be the infinite-dimensional torus. We let $\Gamma_{\text{CH}}$ be the orgraph with $V(\Gamma_{\text{CH}})=\Omega$ that
is the lexicographic product of countably many copies of $\Gamma_0$. In other words, for any two different vertices
${\sf x}=(x_1,x_2,\ldots,x_n,\ldots),\ {\sf y}=(y_1,\ldots,y_n\ldots)\in \Omega$ we choose the minimal $d$ for which
$x_d\neq y_d$ and let $\edge{\sf x}{\sf y}\in E(\Gamma_{\text{CH}})$ if and only if $\edge{x_d}{y_d}\in E(\Gamma_0)$.

$\Omega$ is a topological space (under product topology), and therefore every probability measure $\mu$ on its Borel
subsets gives rise to an oriented graphon \cite{LoSz}, as well as to a homomorphism $\phi\in\Hom^+(\scr
A^0[T_{\text{CH}}], {\Bbb R})$ \cite{flag}, where $T_{\text{CH}}$ is the theory of $\vec C_3$-free orgraphs. We now describe
those measures $\mu$ that lead to asymptotically extremal examples for Conjecture \ref{ch}.

Fix a probability measure $\mu$ on Borel subsets of $\Omega$. Every finite string $(a^1,\ldots,a^d)\in (S^1)^d$ defines
the canonical closed set $\Omega_a=\set{{\sf x}\in\Omega}{x_1=a_1,\ldots,x_d=a_d}$. Whenever $\mu(\Omega_a)>0$, we
have the conditional measure $\mu_a$ on $\Omega_a$ ($\mu_a(X)\df \frac{\mu(X)}{\mu(\Omega_a)},\ X\subseteq
\Omega_a$) and then the pushforward measure $\widehat\mu_a$ on $S^1$ defined by projecting $\Omega_a$ onto the $(d+1)$st
coordinate. Let us call the measure $\mu$ {\em extremal} if for every prefix $a$ for which $\mu(\Omega_a)>0$, this measure $\widehat\mu_a$ has one of
the following two forms:
\begin{itemize}
\item uniform (Lebesgue) measure on $S^1$;

\item uniform discrete measure on the set $\left\{ \frac 0{3h+1}, \frac 1{3h+1}, \ldots, \frac{3h}{3h+1}\right\}$
    for some integer $h\geq 1$.
\end{itemize}

A combinatorial way to visualize an extremal measure $\mu$ is by a locally finite rooted tree in which every non-leaf node has
outdegree $(3h+1)$ for some $h$; the first case (of Lebesgue measure) corresponds to a leaf.

\begin{claim}
For any extremal measure $\mu$ on $\Omega$ with the above property and for any $\sf x\in\Omega$,
$$\mu(\set{\sf y\in\Omega}{\edge
xy\in E(\Gamma_{\text{CH}})})=1/3.$$
\end{claim}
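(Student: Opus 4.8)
The plan is to evaluate $\mu\big(N^+({\sf x})\big)$ directly, where $N^+({\sf x}):=\set{{\sf y}\in\Omega}{\edge{\sf x}{\sf y}\in E(\Gamma_{\text{CH}})}$, by splitting along the lexicographic level at which ${\sf y}$ first differs from ${\sf x}$. Write $N^+_{\Gamma_0}(x):=\set{t\in S^1}{\edge xt\in E(\Gamma_0)}$ for the out-neighbourhood of $x$ in $\Gamma_0$, an arc of length $\tfrac13$. By the definition of $\Gamma_{\text{CH}}$ as a lexicographic product, every ${\sf y}\neq{\sf x}$ has a well-defined least index $d$ with $y_d\neq x_d$, and $\edge{\sf x}{\sf y}\in E(\Gamma_{\text{CH}})$ if and only if $y_d\in N^+_{\Gamma_0}(x_d)$. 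Hence $N^+({\sf x})$ is the countable disjoint union $\bigsqcup_{d\geq1}B_d$, where $B_d:=\set{{\sf y}\in\Omega}{y_i=x_i\ (i<d),\ y_d\in N^+_{\Gamma_0}(x_d)}$, and it suffices to compute $\sum_{d\geq1}\mu(B_d)$.

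Put $q_d:=\mu(\Omega_{(x_1,\dots,x_d)})$, so $q_0=1$ and $q_{d-1}$ is the quantity $\mu(\Omega_a)$ of the excerpt for $a$ the length-$(d-1)$ prefix of ${\sf x}$. If $q_{d-1}=0$ then $B_d\subseteq\Omega_{(x_1,\dots,x_{d-1})}$ is $\mu$-null. Otherwise, passing to the conditional measure $\mu_a$ and then to its pushforward $\widehat\mu_a$ onto the $d$-th coordinate gives $\mu(B_d)=q_{d-1}\cdot\widehat\mu_a\big(N^+_{\Gamma_0}(x_d)\big)$, together with $\widehat\mu_a(\{x_d\})=q_d/q_{d-1}$. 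Now extremality enters: $\widehat\mu_a$ is either Lebesgue on $S^1$ or uniform on the $3h+1$ equally spaced points for some $h\geq1$, while in $\Gamma_0$ the out-neighbours of $x_d$, the in-neighbours of $x_d$, and the points $\neq x_d$ independent of $x_d$ form three consecutive arcs of length $\tfrac13$. A direct check shows that, provided $x_d$ lies in the support of $\widehat\mu_a$, these three arcs carry equal $\widehat\mu_a$-mass: mass $\tfrac13$ each in the Lebesgue case; and in the discrete case, when $x_d$ is one of the grid points, exactly $h$ of the remaining $3h$ grid points fall into each arc (because $\tfrac{h}{3h+1}<\tfrac13<\tfrac{h+1}{3h+1}$, and the arc endpoints miss the grid since $3h+1$ is not divisible by $3$). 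Hence $\widehat\mu_a\big(N^+_{\Gamma_0}(x_d)\big)=\tfrac13\big(1-\widehat\mu_a(\{x_d\})\big)$, that is, $\mu(B_d)=\tfrac13(q_{d-1}-q_d)$ — an identity that holds trivially when $q_{d-1}=0$ too.

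The crux is that the resulting series now telescopes. If some level has $\widehat\mu_a$ Lebesgue, then $q_{d-1}=0$ (hence $\mu(B_d)=0$) for every deeper $d$, since the Lebesgue mass of a point is $0$; otherwise $q_d=\prod_{i\leq d}\tfrac1{3h_i+1}\leq 4^{-d}$. Either way $\lim_{d\to\infty}q_d=0$, so
$$\mu\big(N^+({\sf x})\big)=\sum_{d\geq1}\mu(B_d)=\sum_{d\geq1}\tfrac13(q_{d-1}-q_d)=\tfrac13\big(q_0-\lim_{d\to\infty}q_d\big)=\tfrac13.$$

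The one point requiring care — and, in effect, the whole obstacle — is the assumption, invoked at every level, that $x_d$ lies in the support of $\widehat\mu_a$. This is exactly the requirement that ${\sf x}$ belong to the support of $\mu$, which is how ``any ${\sf x}\in\Omega$'' should be understood here (equivalently: the statement holds for $\mu$-almost every ${\sf x}$, which is all the application needs; for an ${\sf x}$ off the support the value can differ, e.g.\ an ${\sf x}$ whose first coordinate is not one of the $3h+1$ grid points has out-degree $\tfrac h{3h+1}\neq\tfrac13$). It is cleanest to begin the proof by identifying the support of an extremal $\mu$ as the set of sequences ${\sf x}$ that trace a branch of the tree of the excerpt — each conditional $\widehat\mu_a$ along such a branch being either Lebesgue (after which all deeper $q$'s vanish) or discrete with $x_d$ among its $3h+1$ atoms — after which the arc count and the telescoping above are entirely routine.
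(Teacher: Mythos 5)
Your proof uses the same decomposition and telescoping argument as the paper: partition $N^+({\sf x})$ by the first coordinate $d$ at which ${\sf y}$ departs from ${\sf x}$, compute $\mu(B_d)=\tfrac13(q_{d-1}-q_d)$ from extremality, and sum, noting $q_d\to 0$. The one substantive addition is that you flag a hypothesis the paper leaves implicit: the identity $\mu(B_d)=\tfrac13(q_{d-1}-q_d)$ holds only when $x_d$ is an atom of $\widehat\mu_a$ (or $\widehat\mu_a$ is Lebesgue). When $\widehat\mu_a$ is the discrete uniform measure on $3h+1$ grid points and $x_d$ is not one of them, the open arc $(x_d,x_d+\tfrac13)$ captures either $h$ or $h+1$ grid points depending on where $x_d$ sits inside its cell, so the mass is $\tfrac{h}{3h+1}$ or $\tfrac{h+1}{3h+1}$, neither equal to $\tfrac13$. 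Thus the claim, read literally as ``for any ${\sf x}\in\Omega$'', fails once some $\widehat\mu_a$ is discrete and ${\sf x}$ strays off the support; it holds for ${\sf x}$ in the support of $\mu$, equivalently for $\mu$-a.e.\ ${\sf x}$, which is all the surrounding discussion needs. Your caveat is a correct and worthwhile sharpening. One minor slip: you assert the off-support out-measure equals $\tfrac{h}{3h+1}$, whereas, as above, it can also be $\tfrac{h+1}{3h+1}$; the conclusion that it differs from $\tfrac13$ is unaffected.
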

\begin{proof}
$\set{\sf y\in\Omega}{\edge{\sf x}{\sf y}\in E(\Gamma_{\text{CH}})}$ splits as the disjoint union $\bigcup_{d=1}^\infty
V_d$, where $V_d$ is the set of all $\sf y$ such that $x_1=y_1,\ldots,x_{d-1}=y_{d-1}$ and $y_d -x_d \mod 1 \in (0,1/3)$.
Our restriction on the measures $\widehat\mu_a$ implies that $\mu(V_d)=\frac 13\of{\mu(\Omega_{x_1,\ldots,x_{d-1}}) -
\mu(\Omega_{x_1,\ldots,x_d})}$. Summing over all $d$ and noting that $\mu(\Omega_{x_1,\ldots,x_d})\leq 4^{-d}$ (and
hence $\lim_{d\to\infty} \mu(\Omega_{x_1,\ldots,x_d})=0$) gives the result.
\end{proof}

This collection of oriented graphons describes what we believe to be the complete set of all known extremal
configurations for Conjecture \ref{ch} (more precisely, in the terminology of \cite[\S 4.1]{flag}, the set of all
homomorphisms $\phi\in \Hom(\scr A^0[T_{\text{CH}}], {\Bbb R})$ with $\delta_\alpha(\phi)=1/3$). If we additionally
require the set $\set{a}{\mu(\Omega_a)>0}$ to be finite, we arrive at (again, to the best of our knowledge) the set of
all known finite but in general {\em weighted} (conjecturally) extremal orgraphs. Vertices correspond to leaves of the representing tree,
and if the product of degrees is the same along all terminal paths, then the measure on leaves becomes uniform, and
this gives us (apparently) all known extremal configurations that are {\em ordinary} (unweighted) orgraphs.

One obvious way to ensure the last uniformity property is by requiring that the tree is balanced and all outdegrees are the same
on each level. But there are more sophisticated ways to arrive at a tree with the required property. For example, some
vertices on the first level (we place the root onto level zero) may have $(3g+1)(3h+1)$ leaves as their children, while others
may branch to  a balanced tree of depth 2 with outdegrees $(3g+1)$ on the first level and $(3h+1)$ on the second (and
yet another subtrees may have the same form but with the outdegrees on the first and second level exchanged). These in
particular lead to extremal examples that do {\em not} possess a vertex-transitive group of automorphisms. Nonetheless,
all these examples are bi-regular and in particular are also good for the original question asked in \cite{BCWa}.

\bigskip
Altogether, there are six $\vec C_3$-free orgraphs on four vertices missing in $\Gamma_{\text{CH}}$ as an induced
subgraph \cite[Proposition 2]{Bon}. Of these, we are interested only in the three shown on Figure \ref{forbidden}, and let us first check that they are indeed missing.
\begin{claim}
None of the three orgraphs on Figure \ref{forbidden} can be realized as an induced subgraph of $\Gamma_{\text{CH}}$. \end{claim}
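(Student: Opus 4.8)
The plan is to reduce everything to two elementary facts about the single circular orgraph $\Gamma_0$, exploiting the recursive (lexicographic-product) description of $\Gamma_{\text{CH}}$. Suppose one of the three orgraphs, call it $H$, were realized as an induced subgraph of $\Gamma_{\text{CH}}$, say on vertices ${\sf v}_1,\ldots,{\sf v}_4\in\Omega$, and for $i\neq j$ let $\ell(i,j)$ be the least coordinate at which ${\sf v}_i$ and ${\sf v}_j$ differ. A one-line check gives the ultrametric inequality $\ell(i,k)\geq\min\{\ell(i,j),\ell(j,k)\}$, so, putting $d_0=\min_{i\neq j}\ell(i,j)$, the relation ``agreeing in the first $d_0$ coordinates'' is an equivalence on $\{1,2,3,4\}$ whose classes $C_1,\ldots,C_m$ satisfy $m\geq 2$ and whose $d_0$-th coordinates $q_1,\ldots,q_m\in S^1$ are pairwise distinct. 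I will use two consequences. First, if $i,i'$ lie in one class and $j$ in another, then ${\sf v}_i$ and ${\sf v}_{i'}$ bear the same adjacency relation to ${\sf v}_j$ --- both are the $\Gamma_0$-relation between the relevant $q$'s --- so vertices of one class are \emph{twins} with respect to every vertex outside it. Second, contracting each class exhibits the corresponding quotient orgraph of $H$ as an induced subgraph of $\Gamma_0$, via $C_a\mapsto q_a$; in particular the partition into singletons ($m=4$) would force $H$ itself to embed into $\Gamma_0$.

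The two facts about $\Gamma_0$ that do the work are: (a) $\Gamma_0$ has no induced $\vec K_{1,2}$ --- if $u\to v$ and $u\to w$ then $v$ and $w$ lie in the open arc $(u,u+1/3)$ of length less than $1/3$, hence are adjacent --- and dually (b) $\Gamma_0$ has no induced $\vec K_{2,1}$. I would also note that $\Gamma_0$ is self-dual (the antipodal map $x\mapsto -x$ reverses all its edges), hence so is $\Gamma_{\text{CH}}$ (reverse every coordinate); since Out-Pendant is the edge-reversal of In-Pendant, it then suffices to rule out Twisted Circle and In-Pendant.

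For each of these two I would go through every partition of the four vertices into $m\geq 2$ nonempty blocks --- there are $14$, of shapes $(1,1,1,1)$, $(2,1,1)$, $(2,2)$, $(3,1)$ --- and eliminate it. If some block has size at least two, the twin condition fails on direct inspection: in Twisted Circle, and likewise in In-Pendant, every pair of vertices is distinguished (by the adjacency relation) by some third vertex, and moreover no three of the four vertices share a common relation to the fourth. The singleton partition is excluded by the quotient observation together with fact (a) for Twisted Circle (its vertex $L$ has out-neighbours $T,B$ which are non-adjacent) and fact (b) for In-Pendant (its vertex $Q$ has in-neighbours $P,W$ which are non-adjacent). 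A slightly slicker alternative I might present instead: facts (a)/(b) applied to the \emph{quotient} force the independent pair of the built-in $\vec K_{1,2}$ or $\vec K_{2,1}$ --- $\{T,B\}$ for Twisted Circle, $\{P,W\}$ for In-Pendant --- to lie in a single block $C$; a short argument then shows the remaining vertex (respectively $R$ and $S$) must lie outside $C$, whereupon it relates differently to the two members of the pair ($T\to R$ but $R\to B$; $P\to S$ but $S$ independent of $W$), contradicting the twin condition. Either way no partition survives, and none of the three orgraphs occurs in $\Gamma_{\text{CH}}$.

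There is no genuinely hard step here; the main thing to get right is the decomposition --- the ultrametric inequality, and hence well-definedness of the blocks and of the twin property --- together with the bookkeeping of checking that the list of $14$ partitions is exhausted. It is worth noting that the recursion into the blocks $H|_{C_a}$ is available but never needed: in every nontrivial top-level split the twin condition, or fact (a)/(b), already produces a contradiction.
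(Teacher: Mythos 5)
Your proposal is correct and follows essentially the same route as the paper: reduce to the first coordinate of disagreement, observe that the resulting partition of $[4]$ has the block-twin property, rule out nontrivial partitions by inspection, and for the trivial partition invoke the absence of induced $\vec K_{1,2},\vec K_{2,1}$ in $\Gamma_0$. The added self-duality observation (halving the case analysis) and the explicit enumeration of partitions are harmless refinements of what the paper leaves as an ``easy inspection.''
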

\begin{proof}
Let ${\sf x}^i\in\Omega\ (i=1..4)$ be four different vertices, and let $d$ be the first integer for which there exist
$1\leq i<j\leq 4$ with $x^i_d\neq x^j_d$. The projection onto the $d$th coordinate defines an equivalence relation $\approx$ on $[4]$ with more than one class and such that if $i\approx j\not\approx i'\approx j'$ then $\edge{\sf x^i}{\sf x^{i'}}\in E(\Gamma_{\text{CH}})$ iff $\edge{\sf x^j}{\sf x^{j'}}\in E(\Gamma_{\text{CH}})$. An easy inspection shows that no non-trivial equivalence relation with these properties exists for any of the orgraphs on Figure \ref{forbidden}. Therefore, in fact all $x^i_d\ (i\in [4])$ are pairwise different, and we actually have an embedding into $\Gamma_0$. But $\Gamma_0$ does not contain induced copies of $\vec K_{1,2}, \vec K_{2,1}$ (see Figure \ref{stars}) as an induced subgraph,while every orgraph on Figure \ref{forbidden} contains one of those. Contradiction.
\end{proof}

\section{Main results}

The main result of this paper is the following.

\begin{theorem} \label{main}
Let $\Gamma$ be an orgraph on $n$ vertices that does not contain either $\vec C_3$ or any of the three orgraphs on Figure \ref{forbidden} as an induced subgraph. Then $\Gamma$ contains a vertex $v$ with $\od_\Gamma(v)\leq \frac{n-1}3$.
\end{theorem}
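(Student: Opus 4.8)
The plan is to argue by contradiction via an extremal/averaging argument, combined with a structural analysis of the forbidden configurations. Suppose $\Gamma$ is a counterexample on $n$ vertices with $\od_\Gamma(v)\geq n/3$ for every $v$ (we may assume the strict inequality $\od_\Gamma(v)>\frac{n-1}{3}$, i.e. $\od_\Gamma(v)\geq \lceil n/3\rceil$, after clearing denominators). The key combinatorial observation is that forbidding the three orgraphs of Figure \ref{forbidden} as induced subgraphs, \emph{together with} $\vec C_3$-freeness, severely constrains the local structure around any edge. Concretely, fix an edge $\edge uv$ and partition $V(\Gamma)\setminus\{u,v\}$ according to adjacency to the ordered pair $(u,v)$: into classes determined by whether a vertex is an out-neighbour, in-neighbour, or non-neighbour of $u$ and of $v$. $\vec C_3$-freeness kills the class of common "bad" vertices ($\edge vw,\edge wu$), and the Twisted Circle, In-Pendant and Out-Pendant each forbid one further specific combination of the remaining adjacency patterns. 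First I would catalogue exactly which of the (at most nine) adjacency types survive, and show that the surviving types force the out-neighbourhood $N^+(v)$ and in-neighbourhood $N^-(v)$ to interact with $u$ in a rigid way — essentially that $u$ and $v$ have "nested" or "comparable" out-neighbourhoods.

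The second step is to convert this local rigidity into a global count. Working in the (finite) flag-algebra bookkeeping advertised in the introduction, I would express the average outdegree density $\frac1n\sum_v \od_\Gamma(v)$, or rather a cleverly chosen weighted version of it, as a sum of nonnegative contributions indexed by the surviving edge-types, and show that the forbidden-subgraph constraints force this average to be strictly below $1/3$, contradicting the assumption $\od_\Gamma(v)/n \geq 1/3$ for all $v$. The natural device here is to pick a vertex $v$ minimising $\od_\Gamma(v)$ (or, in the density normalization, to track the quantity that the paper will isolate in Claim \ref{crucial}) and run an induction on $n$: delete $v$ together with an appropriately chosen small set of its neighbours, verify that the induced suborgraph still avoids $\vec C_3$ and all three forbidden orgraphs (inducedness is inherited under taking induced subgraphs, so this is immediate), apply the inductive hypothesis, and then control the change in the extremal parameter using the local structure from step one. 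The warning in the introduction that "lower-order terms do begin to accumulate" suggests the induction is set up so that one deletes $\Theta(1)$ vertices at a time and must track an $O(1/n)$-type error; I would budget for this by proving a slightly stronger inequality with room to spare.

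The main obstacle, I expect, is exactly the passage from the purely local information (which adjacency types around an edge are allowed) to a \emph{sharp} global inequality — i.e. identifying the right linear combination of type densities whose nonnegativity is guaranteed by $\vec C_3$-freeness and whose coefficients are tuned so that the forbidden types push the total strictly under $1/3$. This is the step the paper flags as "the crucial Claim \ref{crucial}", and it is where the flag-algebra formalism earns its keep: one needs a positive-semidefinite-flavoured certificate, though here (as the authors note) it reduces to a direct, Cauchy–Schwarz-free manipulation because the relevant quadratic form degenerates to a manifestly nonnegative expression once the forbidden configurations are removed. A secondary difficulty is bookkeeping discipline: the adjacency classification around an edge already involves nine cases, and the product/convolution structure needed to relate densities of 2-vertex, 3-vertex and 4-vertex patterns must be handled with the "systematic and consistent notation for types and flags" the introduction promises, or the error terms will genuinely get out of hand. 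I would therefore first set up that notation carefully, then prove the type-restriction lemma, then the crucial density inequality, and only then assemble the induction.
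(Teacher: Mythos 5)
Your proposal captures only the outer shell of the paper's argument (induction on $n$ plus a density inequality plus an averaging step), and the inner mechanism — which is where all the difficulty lives — is either missing or points in a direction that, on its own, does not obviously close.

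The single most important device you do not identify is the notion of a \emph{critical edge}: an edge $\edge vw$ such that $w$ has minimal outdegree inside the out-neighbourhood $A(v)$ of $v$. You propose instead to pick a globally minimum-outdegree vertex $v$ and delete it together with a small set of its neighbours, then recurse. That is a fundamentally different (delete-and-recurse) induction, and there is no evidence it works here — the paper never deletes vertices and reapplies the theorem to the remainder. Rather, the inductive hypothesis is used \emph{locally}: once applied to $\Gamma|_{A(v)}$ it guarantees the existence of a critical out-neighbour with $O^A(v,w)\leq\frac13\alpha(v)$ (Claim~\ref{p3a}), and applied to a further subset $U$ it yields the quantitative gap $3O^A(u,v)\leq\vec P_3^N(u,w)-\frac1{n-2}$ (Claim~\ref{induction}). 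These are local structural inequalities at a critical edge, not a size-reduction step. Your budgeting for ``$O(1/n)$-type error from deleting $\Theta(1)$ vertices'' is aimed at the wrong target.

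The second missing piece is the shape of the final averaging. You propose to bound the \emph{average} outdegree density $\frac1n\sum_v\od_\Gamma(v)$ below $1/3$, but this is false in the conjectured extremal configurations (they have \emph{every} outdegree equal to $(n-1)/3$), so no such bound can hold. What the paper actually proves (Claim~\ref{crucial}) is an inequality at a single critical 2-path $u\to v\to w$ of the form
\begin{equation*}
\alpha(u)+\alpha(v)+\alpha(w)+\Phi(u,v)-\Phi(v,w)\leq 1,
\end{equation*}
where $\Phi=O^A+I^A+\vec K_{2,1}^A$; the point is that when summed over a directed \emph{cycle} of critical edges (such a cycle exists because every vertex has at least one critical out-edge), the $\Phi$-terms telescope and one gets $\sum_i\alpha(v_i)\leq\ell/3$, whence some $\alpha(v_i)\leq1/3$. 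Nothing in your sketch anticipates a telescoping certificate carried around a critical cycle, and without it the ``cleverly chosen weighted version'' of the average is not just unspecified but does not exist in the form you describe.

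Finally, while you correctly observe that the three forbidden orgraphs constrain the adjacency pattern around an edge, you do not derive the concrete consequences the proof actually needs, chiefly Claim~\ref{ohata} ($\widehat O^A(v,w)=0$ for a critical edge) and Claims~\ref{p3}/\ref{nok21}, which use Twisted Circle, In-Pendant and Out-Pendant in a targeted, non-exhaustive way rather than via a $9$-way case catalogue. These claims, not a generic type catalogue, are what make the left-hand side of the crucial inequality controllable. In short: the proposal is a plausible-sounding roadmap, but it lacks the critical-edge concept, the telescoping sum over a critical cycle, and the specific structural lemmas; and the one concrete alternative it does offer (global average bound, delete-and-recurse induction) would not succeed.
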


Before we begin the proof of Theorem \ref{main}, let us show how to drop the restriction of being induced.
\begin{theorem} \label{supplementary}
Assume that the CH-conjecture holds for all $\vec C_3$-free orgraphs containing at least one of the orgraphs\footnote{Encircled on this figure are those vertices that are new w.r.t. Figure \ref{forbidden}.} on Figure \ref{not_induced} as a (not necessarily induced) subgraph.
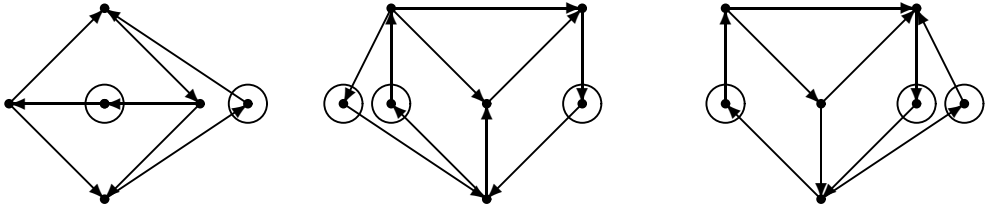
\begin{figure}[tb]
\begin{center}
\setlength{\unitlength}{0.254mm}
\begin{picture}(512,104)(8,-112)
        \special{color rgb 0 0 0}\allinethickness{0.254mm}\special{sh 0.99}\put(10,-60){\ellipse{4}{4}} 
        \special{color rgb 0 0 0}\allinethickness{0.254mm}\special{sh 0.99}\put(60,-10){\ellipse{4}{4}} 
        \special{color rgb 0 0 0}\allinethickness{0.254mm}\put(60,-10){\vector(1,-1){50}} 
        \special{color rgb 0 0 0}\allinethickness{0.254mm}\put(10,-60){\vector(1,1){50}} 
        \special{color rgb 0 0 0}\allinethickness{0.254mm}\put(10,-60){\vector(1,-1){50}} 
        \special{color rgb 0 0 0}\allinethickness{0.254mm}\put(110,-60){\vector(-1,-1){50}} 
        \special{color rgb 0 0 0}\allinethickness{0.254mm}\special{sh 0.99}\put(110,-60){\ellipse{4}{4}} 
        \special{color rgb 0 0 0}\allinethickness{0.254mm}\special{sh 0.99}\put(60,-110){\ellipse{4}{4}} 
        \special{color rgb 0 0 0}\allinethickness{0.254mm}\special{sh 0.99}\put(210,-10){\ellipse{4}{4}} 
        \special{color rgb 0 0 0}\allinethickness{0.254mm}\special{sh 0.99}\put(260,-60){\ellipse{4}{4}} 
        \special{color rgb 0 0 0}\allinethickness{0.254mm}\put(210,-10){\vector(1,-1){50}} 
        \special{color rgb 0 0 0}\allinethickness{0.254mm}\special{sh 0.99}\put(310,-10){\ellipse{4}{4}} 
        \special{color rgb 0 0 0}\allinethickness{0.254mm}\special{sh 0.99}\put(260,-110){\ellipse{4}{4}} 
        \special{color rgb 0 0 0}\allinethickness{0.254mm}\put(210,-10){\vector(1,0){100}} 
        \special{color rgb 0 0 0}\allinethickness{0.254mm}\put(260,-60){\vector(1,1){50}} 
        \special{color rgb 0 0 0}\allinethickness{0.254mm}\put(260,-110){\vector(0,1){50}} 
        \special{color rgb 0 0 0}\allinethickness{0.254mm}\special{sh 0.99}\put(60,-60){\ellipse{4}{4}} 
        \special{color rgb 0 0 0}\allinethickness{0.254mm}\put(110,-60){\vector(-1,0){50}} 
        \special{color rgb 0 0 0}\allinethickness{0.254mm}\put(60,-60){\vector(-1,0){50}} 
        \special{color rgb 0 0 0}\allinethickness{0.254mm}\put(60,-110){\vector(3,2){75}} 
        \special{color rgb 0 0 0}\allinethickness{0.254mm}\put(135,-60){\vector(-3,2){75}} 
        \special{color rgb 0 0 0}\allinethickness{0.254mm}\special{sh 0.99}\put(135,-60){\ellipse{4}{4}} 
        \special{color rgb 0 0 0}\allinethickness{0.254mm}\put(60,-60){\ellipse{4}{4}} 
        \special{color rgb 0 0 0}\allinethickness{0.254mm}\special{sh 0.99}\put(310,-60){\ellipse{4}{4}} 
        \special{color rgb 0 0 0}\allinethickness{0.254mm}\special{sh 0.99}\put(210,-60){\ellipse{4}{4}} 
        \special{color rgb 0 0 0}\allinethickness{0.254mm}\special{sh 0.99}\put(185,-60){\ellipse{4}{4}} 
        \special{color rgb 0 0 0}\allinethickness{0.254mm}\put(310,-10){\vector(0,-1){50}} 
        \special{color rgb 0 0 0}\allinethickness{0.254mm}\put(310,-60){\vector(-1,-1){50}} 
        \special{color rgb 0 0 0}\allinethickness{0.254mm}\put(260,-110){\vector(-1,1){50}} 
        \special{color rgb 0 0 0}\allinethickness{0.254mm}\put(210,-60){\vector(0,1){50}} 
        \special{color rgb 0 0 0}\allinethickness{0.254mm}\put(210,-10){\vector(-1,-2){25}} 
        \special{color rgb 0 0 0}\allinethickness{0.254mm}\put(185,-60){\vector(3,-2){75}} 
        \special{color rgb 0 0 0}\allinethickness{0.254mm}\special{sh 0.99}\put(385,-10){\ellipse{4}{4}} 
        \special{color rgb 0 0 0}\allinethickness{0.254mm}\special{sh 0.99}\put(435,-60){\ellipse{4}{4}} 
        \special{color rgb 0 0 0}\allinethickness{0.254mm}\put(385,-10){\vector(1,-1){50}} 
        \special{color rgb 0 0 0}\allinethickness{0.254mm}\special{sh 0.99}\put(485,-10){\ellipse{4}{4}} 
        \special{color rgb 0 0 0}\allinethickness{0.254mm}\special{sh 0.99}\put(435,-110){\ellipse{4}{4}} 
        \special{color rgb 0 0 0}\allinethickness{0.254mm}\put(385,-10){\vector(1,0){100}} 
        \special{color rgb 0 0 0}\allinethickness{0.254mm}\put(435,-60){\vector(1,1){50}} 
        \special{color rgb 0 0 0}\allinethickness{0.254mm}\special{sh 0.99}\put(485,-60){\ellipse{4}{4}} 
        \special{color rgb 0 0 0}\allinethickness{0.254mm}\special{sh 0.99}\put(385,-60){\ellipse{4}{4}} 
        \special{color rgb 0 0 0}\allinethickness{0.254mm}\put(485,-10){\vector(0,-1){50}} 
        \special{color rgb 0 0 0}\allinethickness{0.254mm}\put(485,-60){\vector(-1,-1){50}} 
        \special{color rgb 0 0 0}\allinethickness{0.254mm}\put(435,-110){\vector(-1,1){50}} 
        \special{color rgb 0 0 0}\allinethickness{0.254mm}\put(385,-60){\vector(0,1){50}} 
        \special{color rgb 0 0 0}\allinethickness{0.254mm}\put(435,-60){\vector(0,-1){50}} 
        \special{color rgb 0 0 0}\allinethickness{0.254mm}\special{sh 0.99}\put(510,-60){\ellipse{4}{4}} 
        \special{color rgb 0 0 0}\allinethickness{0.254mm}\put(435,-110){\vector(3,2){75}} 
        \special{color rgb 0 0 0}\allinethickness{0.254mm}\put(510,-60){\vector(-1,2){25}} 
        \special{color rgb 0 0 0}\allinethickness{0.254mm}\put(60,-60){\ellipse{20}{20}} 
        \special{color rgb 0 0 0}\allinethickness{0.254mm}\put(135,-60){\ellipse{20}{20}} 
        \special{color rgb 0 0 0}\allinethickness{0.254mm}\put(185,-60){\ellipse{20}{20}} 
        \special{color rgb 0 0 0}\allinethickness{0.254mm}\put(210,-60){\ellipse{20}{20}} 
        \special{color rgb 0 0 0}\allinethickness{0.254mm}\put(310,-60){\ellipse{20}{20}} 
        \special{color rgb 0 0 0}\allinethickness{0.254mm}\put(385,-60){\ellipse{20}{20}} 
        \special{color rgb 0 0 0}\allinethickness{0.254mm}\put(485,-60){\ellipse{20}{20}} 
        \special{color rgb 0 0 0}\allinethickness{0.254mm}\put(510,-60){\ellipse{20}{20}} 
        \special{color rgb 0 0 0} 
\end{picture}
\caption{\label{not_induced} Another set of forbidden orgraphs}
\end{center}
\end{figure}
Then the CH-conjecture holds for all $\vec C_3$-free orgraphs.
\end{theorem}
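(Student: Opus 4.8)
The plan is to combine Theorem~\ref{main} with the hypothesis by a maximality argument, exploiting the fact that the three orgraphs of Figure~\ref{not_induced} are precisely what a $\vec C_3$-free orgraph is forced to exhibit around an induced copy of one of the orgraphs of Figure~\ref{forbidden} once no further edges can be added. First I would observe that it suffices to establish the CH-conjecture for \emph{edge-maximal} $\vec C_3$-free orgraphs $\Gamma$, i.e.\ those to which no edge, in either direction, can be added to any non-edge without creating a $\vec C_3$: an arbitrary $\vec C_3$-free orgraph $\Gamma'$ extends to such a $\Gamma$ on the same vertex set, and since $\od_\Gamma(v)\geq\od_{\Gamma'}(v)$ for every $v$, a vertex witnessing Conjecture~\ref{ch} for $\Gamma$ witnesses it for $\Gamma'$ as well. (Equivalently one can run an induction on the number of non-edges of $\Gamma$.)

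So let $\Gamma$ be edge-maximal $\vec C_3$-free. If $\Gamma$ has no induced copy of any orgraph from Figure~\ref{forbidden}, then $\Gamma$ satisfies the CH-conjecture by Theorem~\ref{main} and we are done. Otherwise $\Gamma$ contains an induced copy $F$ of the Twisted Circle, the In-Pendant, or the Out-Pendant, on a vertex set $W$ with $|W|=4$; I claim that then $\Gamma$ already contains one of the orgraphs of Figure~\ref{not_induced} as a (not necessarily induced) subgraph, at which point the hypothesis finishes the proof. Consider, for concreteness, the case that $F$ is the Twisted Circle, with edges $b\to a$, $a\to c$, $b\to d$, $c\to d$ and non-edges $\{a,d\}$, $\{b,c\}$. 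Of the two orientations of $\{b,c\}$, the orientation $c\to b$ is already impossible in $\Gamma$ because $c\to b$, $b\to a$, $a\to c$ would be a $\vec C_3$; likewise $d\to a$ is impossible because of $c\to d$ and $a\to c$. But the orientations $b\to c$ and $a\to d$ are not excluded inside $W$, so by edge-maximality adding either one still creates a $\vec C_3$; inspecting the edges among $W$ shows that this forces the existence of a vertex $e\notin W$ with $c\to e$ and $e\to b$, and of a vertex $f\notin W$ with $d\to f$ and $f\to a$. Since $e=f$ would give the $\vec C_3$ formed by $a\to c$, $c\to e$, $e\to a$, the six vertices $W\cup\{e,f\}$ are distinct and carry all the edges of the leftmost orgraph of Figure~\ref{not_induced}. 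The In-Pendant and the Out-Pendant are handled the same way, the only difference being that three of the non-edge orientations (instead of two) are not internally excluded and hence force external length-two paths, and these produce exactly the three extra, encircled vertices of the corresponding orgraph of Figure~\ref{not_induced}.

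The step I expect to be the real obstacle is that for the In-Pendant and the Out-Pendant the external witness vertices produced this way need not be pairwise distinct: for the In-Pendant the forced length-two paths from $r$ to $s$ and from $p$ to $s$ may run through the same vertex, and analogously for the Out-Pendant. When such a coincidence occurs the seven-vertex orgraph of Figure~\ref{not_induced} is not literally spanned by $W$ together with the witnesses, and one must argue separately that the resulting six-vertex configuration still contains one of the three forbidden orgraphs; a direct check shows that in every such degenerate case it contains the six-vertex Twisted-Circle variant, where $\vec C_3$-freeness of $\Gamma$ is used both to discard the impossible orientations of the coincidence and to locate the copy of the Twisted Circle in the ones that remain. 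Disposing of this short list of coincidences is where essentially all the work lies; the rest is the routine bookkeeping, for each of the three orgraphs of Figure~\ref{forbidden}, of which orientation of which of its non-edges is ruled out internally and which is not.
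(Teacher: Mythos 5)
Your proposal is correct and follows essentially the same route as the paper: reduce to edge-maximal $\vec C_3$-free orgraphs, apply Theorem \ref{main} if no forbidden induced subgraph appears, and otherwise use maximality to force external length-two paths across the internally-unexcluded orientations of the non-edges of the induced forbidden orgraph, producing (after the same inspection of which coincidences of witness vertices are impossible and which degenerate cases fall back to the first orgraph of Figure \ref{not_induced}) a non-induced copy from Figure \ref{not_induced}.
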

\begin{proof}
Let us describe first how this list of orgraphs was generated from Figure \ref{forbidden}. For every orgraph $\Gamma$ on that figure, we took all ordered pairs of independent vertices $v,w$ such that there is no vertex $x$ with $\edge vx, \edge xw\in E(\Gamma)$. And then for every such pair we added one new vertex $x$ with precisely these edges. On Figure \ref{not_induced}, these auxiliary vertices are encircled.

Assume now that the CH-conjecture holds for all graphs that contain at least one orgraph on Figure \ref{not_induced}. Let $\Gamma$ be an arbitrary $\vec C_3$-free orgraph; we want to show the existence of a vertex $v$ with $\od_\Gamma(v)\leq \frac{n-1}{3}$. W.l.o.g. we may assume that adding any new edges to $\Gamma$ destroys $\vec C_3$-freeness, or, in other words, that every pair $(v,w)$ of independent vertices appears as a diagonal of a copy of $\vec C_4$ in $\Gamma$.

If $\Gamma$ does not have {\em induced} copies of the three orgraphs shown on Figure \ref{forbidden}, we are done by Theorem \ref{main}.

Otherwise, our construction and the remark above imply that $\Gamma$ must contain as a (not necessarily induced) subgraph one of the three orgraphs on Figure \ref{not_induced}, except that some of the encircled vertices can be identical. It is, however, easy to see by inspection that identifying any two of them leads either to anti-parallel edges or a copy of $\vec C_3$, except for the pair of outer-most vertices on the second or the third orgraph. But it is easy to see that the result of this identification will contain the first orgraph on Figure \ref{not_induced} and thus does not need a special treatment.

We have shown that $\Gamma$ must contain one of the three orgraphs on Figure \ref{not_induced}, and therefore the required vertex $v$ exists by our assumption.
\end{proof}

We hope that this piece of information about the structure of hypothetical counterexamples to the CH-conjecture that separates them from all known extremal configurations, may turn out helpful, presumably in combination with inductive arguments of the kind that have been already extensively used in previous research on this problem.

\bigskip
The rest of the paper is entirely devoted to the proof of Theorem \ref{main}. Arguing by induction on the number of vertices, we fix a finite orgraph $\Gamma$ that does not contain either $\vec C_3$ or induced copies of the three orgraphs on Figure \ref{forbidden} and such that the $CH$-conjecture holds for all its proper {\em induced} subgraphs. Our goal is to prove it for $\Gamma$.

\section{Flag Algebras}

As indicated in Introduction, in this paper we use only a tiny fragment of the whole theory, in the amount of the first four pages of \cite[\S 2.1]{flag}.

A {\em type} is a $\vec C_3$-free orgraph $\sigma$ with $V(\sigma)=[k]$ for some non-negative integer $k$ called the {\em size} of $\sigma$. A {\em $\sigma$}-flag is a pair $F=(\Gamma,\theta)$, where $\Gamma$ is a $\vec C_3$-free orgraph and $\theta\function{\sigma}{\Gamma}$ is an induced embedding. Thus, from the combinatorial point of view, a type is just a (totally) labeled orgraph, and a flag is a partially labeled one. Vertices from $V(\Gamma)\setminus \im(\theta)$ will be sometimes called {\em free}.  If $F=(\Gamma,\theta)$ is a $\sigma$-flag and $V\subseteq V(\Gamma)$ contains $\im(\theta)$, then the sub-flag $(\Gamma|_V,\theta)$ will be also denoted by $F|_V$.

A {\em flag embedding} $\alpha\function{F}{F'}$, where $F=(\Gamma,\theta)$ and $F'=(\Gamma',\theta')$ are $\sigma$-flags for the same type $\sigma$ is an induced embedding of orgraphs $\alpha\function{\Gamma}{\Gamma'}$ such that $\theta'=\alpha\theta$ (i.e., ``label-preserving''). $F$ and $F'$ are {\em isomorphic} (denoted by $F\approx F'$) if there is a one-to-one flag embedding $\alpha\function{F}{F'}$.  $\scr F^\sigma_\ell$ is the set of all $\sigma$-flags on $\ell$ vertices.

If $F\in \scr F^\sigma_\ell$ and $F'=(\Gamma,\theta)\in \scr F^\sigma_L$ with $L\geq\ell$, the key quantity in the whole theory is the density $p(F,F')$ of induced copies of $F$ in $F'$ defined as follows. We choose in $V(\Gamma)$ uniformly and at random a subset $\rn V$ of cardinality $\ell$ subject to the condition $\rn V\supseteq \im(\theta)$, and let $p(F,F')$ denote the probability of the event $F'|_{\rn V}\approx F$. In almost all calculations used in this paper, $\ell=k+1$ and hence $\rn V$ can be identified with a single vertex $\rn x$ chosen uniformly at random from $V(\Gamma)\setminus \im(\theta)$.

From now on, we fix an orgraph $\Gamma$ that does not contain either $\vec C_3$ or induced copies of the three orgraphs on Figure \ref{forbidden} and such that the $CH$-conjecture holds for all its proper induced subgraphs. If pairwise different vertices $v_1,\ldots,v_k\in V(\Gamma)$ induce a copy of a type $\sigma$ of size $k$, then, letting $\theta\function{[k]}{V(\Gamma)}$ be the corresponding embedding defined by $\theta(i)=v_i$, $(\Gamma,\theta)$ becomes a $\sigma$-flag, and for another flag $F$ (typically, fixed and very small) we introduce the abbreviation
\begin{equation} \label{relative}
F(v_1,\ldots,v_k)\df p(F,(\Gamma,\theta)).
\end{equation}

\medskip
Next, we list concrete types and flags needed for our purposes.

0 and 1 are the unique type of sizes 0 and 1, respectively. $A$ is the type of size 2 with $E(A)\df \{\edge 12\}$, and $N$ is the type of size 2 without any edges. $P$ is the type of size 3 with $E(P)\df \{\edge 12, \edge 23\}$.

$\alpha\in \scr F^1_2$ is a directed edge in which the tail vertex is labeled by 1; our final goal is to find a vertex $v$ with $\alpha(v)\leq 1/3$. For a type $\sigma$ of size $k$, let $O^\sigma\in\scr F^\sigma_{k+1}$ be the flag in which the only free vertex has $k$ incoming edges. Removing from $O^P$ label 3, we will get a flag from $\scr F^A_4$ that we will denote by $\widehat O^A$, see Figure \ref{oflags}.
\begin{figure}[tb]
\begin{center}
\setlength{\unitlength}{0.254mm}
\begin{picture}(473,146)(25,-153)
        \special{color rgb 0 0 0}\allinethickness{0.254mm}\special{sh 0.99}\put(35,-75){\ellipse{4}{4}} 
        \special{color rgb 0 0 0}\allinethickness{0.254mm}\special{sh 0.99}\put(85,-25){\ellipse{4}{4}} 
        \special{color rgb 0 0 0}\put(65,-151){\shortstack{$O^A$}} 
        \special{color rgb 0 0 0}\allinethickness{0.254mm}\put(35,-75){\vector(1,1){50}} 
        \special{color rgb 0 0 0}\allinethickness{0.254mm}\put(35,-75){\vector(1,-1){50}} 
        \special{color rgb 0 0 0}\allinethickness{0.254mm}\special{sh 0.99}\put(85,-125){\ellipse{4}{4}} 
        \special{color rgb 0 0 0}\allinethickness{0.254mm}\special{sh 0.99}\put(235,-25){\ellipse{4}{4}} 
        \special{color rgb 0 0 0}\put(225,-151){\shortstack{$O^P$}} 
        \special{color rgb 0 0 0}\allinethickness{0.254mm}\put(235,-25){\vector(1,-1){50}} 
        \special{color rgb 0 0 0}\allinethickness{0.254mm}\put(185,-75){\vector(1,1){50}} 
        \special{color rgb 0 0 0}\allinethickness{0.254mm}\put(185,-75){\vector(1,-1){50}} 
        \special{color rgb 0 0 0}\allinethickness{0.254mm}\put(285,-75){\vector(-1,-1){50}} 
        \special{color rgb 0 0 0}\allinethickness{0.254mm}\special{sh 0.99}\put(285,-75){\ellipse{4}{4}} 
        \special{color rgb 0 0 0}\allinethickness{0.254mm}\special{sh 0.99}\put(235,-125){\ellipse{4}{4}} 
        \special{color rgb 0 0 0}\allinethickness{0.254mm}\put(85,-25){\vector(0,-1){100}} 
        \special{color rgb 0 0 0}\put(25,-71){\shortstack{\scriptsize 1}} 
        \special{color rgb 0 0 0}\put(75,-21){\shortstack{\scriptsize 2}} 
        \special{color rgb 0 0 0}\allinethickness{0.254mm}\special{sh 0.99}\put(185,-75){\ellipse{4}{4}} 
        \special{color rgb 0 0 0}\put(175,-71){\shortstack{\scriptsize 1}} 
        \special{color rgb 0 0 0}\put(225,-21){\shortstack{\scriptsize 2}} 
        \special{color rgb 0 0 0}\put(290,-71){\shortstack{\scriptsize 3}} 
        \special{color rgb 0 0 0}\allinethickness{0.254mm}\put(235,-25){\vector(0,-1){100}} 
        \special{color rgb 0 0 0}\allinethickness{0.254mm}\special{sh 0.99}\put(415,-25){\ellipse{4}{4}} 
        \special{color rgb 0 0 0}\put(405,-151){\shortstack{$\widehat O^A$}} 
        \special{color rgb 0 0 0}\allinethickness{0.254mm}\put(415,-25){\vector(1,-1){50}} 
        \special{color rgb 0 0 0}\allinethickness{0.254mm}\put(365,-75){\vector(1,1){50}} 
        \special{color rgb 0 0 0}\allinethickness{0.254mm}\put(365,-75){\vector(1,-1){50}} 
        \special{color rgb 0 0 0}\allinethickness{0.254mm}\put(465,-75){\vector(-1,-1){50}} 
        \special{color rgb 0 0 0}\allinethickness{0.254mm}\special{sh 0.99}\put(465,-75){\ellipse{4}{4}} 
        \special{color rgb 0 0 0}\allinethickness{0.254mm}\special{sh 0.99}\put(415,-125){\ellipse{4}{4}} 
        \special{color rgb 0 0 0}\allinethickness{0.254mm}\special{sh 0.99}\put(365,-75){\ellipse{4}{4}} 
        \special{color rgb 0 0 0}\put(355,-71){\shortstack{\scriptsize 1}} 
        \special{color rgb 0 0 0}\put(405,-21){\shortstack{\scriptsize 2}} 
        \special{color rgb 0 0 0}\allinethickness{0.254mm}\put(415,-25){\vector(0,-1){100}} 
        \special{color rgb 0 0 0} 
\end{picture}
\caption{\label{oflags} $O$-flags}
\end{center}
\end{figure}
We will also need a few other miscellaneous flags from $\scr F^A_3, \scr F^N_3$ shown on Figure \ref{misc}.
\begin{figure}[tb]
\begin{center}
\setlength{\unitlength}{0.254mm}
\begin{picture}(680,146)(20,-173)
        \special{color rgb 0 0 0}\allinethickness{0.254mm}\special{sh 0.99}\put(285,-95){\ellipse{4}{4}} 
        \special{color rgb 0 0 0}\allinethickness{0.254mm}\special{sh 0.99}\put(335,-45){\ellipse{4}{4}} 
        \special{color rgb 0 0 0}\put(325,-171){\shortstack{$\vec P_3^A$}} 
        \special{color rgb 0 0 0}\allinethickness{0.254mm}\put(285,-95){\vector(1,1){50}} 
        \special{color rgb 0 0 0}\allinethickness{0.254mm}\special{sh 0.99}\put(385,-95){\ellipse{4}{4}} 
        \special{color rgb 0 0 0}\allinethickness{0.254mm}\special{sh 0.99}\put(80,-45){\ellipse{4}{4}} 
        \special{color rgb 0 0 0}\put(70,-171){\shortstack{$\vec K_{1,2}^A$}} 
        \special{color rgb 0 0 0}\allinethickness{0.254mm}\put(30,-95){\vector(1,1){50}} 
        \special{color rgb 0 0 0}\allinethickness{0.254mm}\put(30,-95){\vector(1,-1){50}} 
        \special{color rgb 0 0 0}\allinethickness{0.254mm}\special{sh 0.99}\put(80,-145){\ellipse{4}{4}} 
        \special{color rgb 0 0 0}\put(275,-91){\shortstack{\scriptsize 1}} 
        \special{color rgb 0 0 0}\put(325,-41){\shortstack{\scriptsize 2}} 
        \special{color rgb 0 0 0}\allinethickness{0.254mm}\special{sh 0.99}\put(30,-95){\ellipse{4}{4}} 
        \special{color rgb 0 0 0}\put(20,-91){\shortstack{\scriptsize 1}} 
        \special{color rgb 0 0 0}\put(70,-41){\shortstack{\scriptsize 2}} 
        \special{color rgb 0 0 0}\put(175,-171){\shortstack{$\vec K_{2,1}^N$}} 
        \special{color rgb 0 0 0}\allinethickness{0.254mm}\put(135,-95){\vector(1,-1){50}} 
        \special{color rgb 0 0 0}\allinethickness{0.254mm}\put(235,-95){\vector(-1,-1){50}} 
        \special{color rgb 0 0 0}\allinethickness{0.254mm}\special{sh 0.99}\put(235,-95){\ellipse{4}{4}} 
        \special{color rgb 0 0 0}\allinethickness{0.254mm}\special{sh 0.99}\put(185,-145){\ellipse{4}{4}} 
        \special{color rgb 0 0 0}\allinethickness{0.254mm}\special{sh 0.99}\put(135,-95){\ellipse{4}{4}} 
        \special{color rgb 0 0 0}\put(125,-91){\shortstack{\scriptsize 1}} 
        \special{color rgb 0 0 0}\put(240,-91){\shortstack{\scriptsize 2}} 
        \special{color rgb 0 0 0}\allinethickness{0.254mm}\put(335,-45){\vector(1,-1){50}} 
        \special{color rgb 0 0 0}\allinethickness{0.254mm}\special{sh 0.99}\put(440,-95){\ellipse{4}{4}} 
        \special{color rgb 0 0 0}\allinethickness{0.254mm}\special{sh 0.99}\put(490,-45){\ellipse{4}{4}} 
        \special{color rgb 0 0 0}\put(480,-171){\shortstack{$\vec P_3^N$}} 
        \special{color rgb 0 0 0}\allinethickness{0.254mm}\put(440,-95){\vector(1,1){50}} 
        \special{color rgb 0 0 0}\allinethickness{0.254mm}\special{sh 0.99}\put(540,-95){\ellipse{4}{4}} 
        \special{color rgb 0 0 0}\put(430,-91){\shortstack{\scriptsize 1}} 
        \special{color rgb 0 0 0}\put(545,-91){\shortstack{\scriptsize 2}} 
        \special{color rgb 0 0 0}\allinethickness{0.254mm}\put(490,-45){\vector(1,-1){50}} 
        \special{color rgb 0 0 0}\allinethickness{0.254mm}\special{sh 0.99}\put(640,-45){\ellipse{4}{4}} 
        \special{color rgb 0 0 0}\put(610,-171){\shortstack{$I^A$}} 
        \special{color rgb 0 0 0}\allinethickness{0.254mm}\put(590,-95){\vector(1,1){50}} 
        \special{color rgb 0 0 0}\allinethickness{0.254mm}\special{sh 0.99}\put(640,-145){\ellipse{4}{4}} 
        \special{color rgb 0 0 0}\allinethickness{0.254mm}\special{sh 0.99}\put(590,-95){\ellipse{4}{4}} 
        \special{color rgb 0 0 0}\put(580,-91){\shortstack{\scriptsize 1}} 
        \special{color rgb 0 0 0}\put(630,-41){\shortstack{\scriptsize 2}} 
        \special{color rgb 0 0 0}\allinethickness{0.254mm}\put(640,-145){\vector(0,1){100}} 
        \special{color rgb 0 0 0}\allinethickness{0.254mm}\put(640,-145){\vector(-1,1){50}} 
        \special{color rgb 0 0 0} 
\end{picture}
\caption{\label{misc} Miscellaneous flags}
\end{center}
\end{figure}
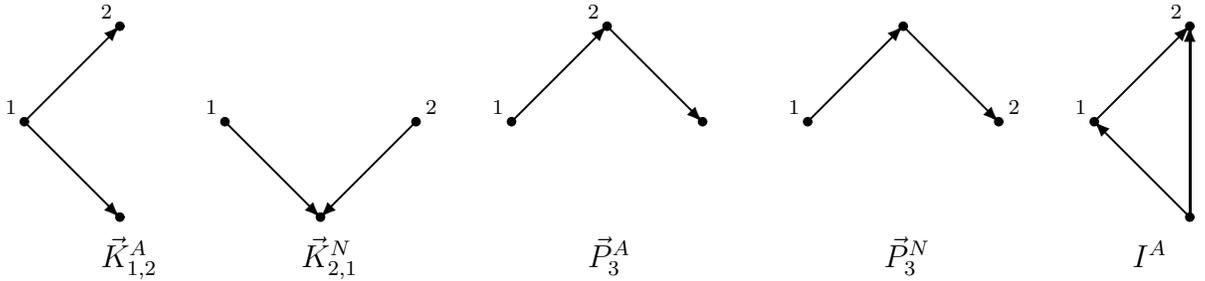

\section{Proof of Theorem \ref{main}}

Let us call an edge $\edge vw$ {\em critical} if $O^A(v,w)$ takes the minimal possible value over all edges going out of $v$. Combinatorially, this means that we are looking at the set $A(v)$ of all out-neighbors of $v$ and pick $w\in A(v)$ to have the smallest possible outdegree in $\Gamma|_{A(v)}$.

\begin{claim} \label{ohata}
For any critical edge $\edge vw$, $\widehat O^A(v,w)=0$.
\end{claim}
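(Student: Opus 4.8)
The plan is to argue by contradiction, playing the criticality of $\edge vw$ against the rigid local structure forced by a copy of $\widehat O^A$. So suppose $\edge vw$ is critical but $\widehat O^A(v,w)>0$. Unwinding the definition of $\widehat O^A$ (it is $O^P$ with the label $3$ forgotten), this produces two distinct vertices $y,z\in V(\Gamma)\setminus\{v,w\}$ such that $\Gamma|_{\{v,w,y,z\}}$ has exactly the edges $\edge vw,\edge vz,\edge wy,\edge wz,\edge yz$ and has $v$ and $y$ independent; note that the two free vertices play genuinely different roles in $\widehat O^A$ ($z$ is a sink, $y$ is not), so this edge pattern is forced, not merely one of several options. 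I will fix such $y,z$ for the rest of the argument and record for later use that $z\in A(v)\cap A(w)$ whereas $y\notin A(v)$. Recall that, combinatorially, criticality of $\edge vw$ means that $w$ has the smallest out-degree in $\Gamma|_{A(v)}$ among all vertices of $A(v)$; since $v\to z$, this already gives $|A(v)\cap A(w)|\le|A(v)\cap A(z)|$, and the whole point will be to derive the reverse strict inequality.

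The crux is the inclusion $A(v)\cap A(z)\subseteq A(v)\cap A(w)$: every common out-neighbor $x$ of $v$ and $z$ is also an out-neighbor of $w$. To prove it, fix $x$ with $v\to x$ and $z\to x$ (a short check shows $x\notin\{v,w,y,z\}$). The edge $\edge xw$ is impossible, since $\edge xw,\edge wz,\edge zx$ would be a copy of $\vec C_3$; hence it is enough to rule out that $x$ and $w$ are independent. Assuming they are, consider how $x$ relates to $y$: the edge $\edge xy$ is impossible as well (it would close the $\vec C_3$ $\edge xy,\edge yz,\edge zx$), so either $y\to x$ or $x$ and $y$ are independent. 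In the first case $\Gamma|_{\{v,w,y,x\}}$ is exactly the Twisted Circle, with $v,w,y,x$ in the roles of $L,T,R,B$. In the second case $\Gamma|_{\{v,y,z,x\}}$ is exactly the In-Pendant, with $v$ the source, $z$ the middle and $x$ the sink of its transitive triangle and $y$ the pendant vertex. Either way we contradict the assumption on $\Gamma$; therefore $w\to x$, which proves the inclusion.

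Once the inclusion is established the conclusion is immediate: $z$ belongs to $A(v)\cap A(w)$ but not to $A(v)\cap A(z)$, so the inclusion is proper and $|A(v)\cap A(z)|<|A(v)\cap A(w)|$, contradicting what criticality gave above. Hence no such $y,z$ exist, i.e. $\widehat O^A(v,w)=0$.

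I expect the only real work to be the inclusion step, and within it the bookkeeping of checking --- pair by pair, non-edges included --- that the two four-vertex induced subgraphs really are the Twisted Circle and the In-Pendant of Figure \ref{forbidden}; everything else is routine manipulation of $\vec C_3$-freeness together with the definition of a critical edge. (In particular, the Out-Pendant of Figure \ref{forbidden} is not needed for this claim.)
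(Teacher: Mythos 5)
Your argument is correct and is essentially the paper's proof, just with the two free vertices of $\widehat O^A$ relabelled (your $z$ is the paper's $x$ and vice versa) and the case analysis slightly reorganized: the paper first pins down $\edge yz$ via the In-Pendant and then invokes the Twisted Circle, whereas you assume $w,x$ independent and reach the two forbidden subgraphs in parallel depending on the $y$–$x$ relation. The key inclusion $A(v)\cap A(z)\subseteq A(v)\cap A(w)$ and the strictness coming from the sink vertex are identical to the paper's $O^A(v,x)<O^A(v,w)$ argument.
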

\begin{proof}
Assume the contrary, that is $\Gamma$ contains a pair of other vertices $x\neq y$ such that $\{\edge vx,\edge wx, \edge wy, \edge yx\}\in E(\Gamma)$ while $v$ and $y$ are independent. We are going to prove that
\begin{equation} \label{increment}
O^A(v,x) < O^A(v,w),
\end{equation}
and this will contradict the assumption that $\edge vw$ is critical.

For that, let us consider an arbitrary vertex $z$ contributing to $O^A(v,x)$ (that is, such that $\edge vz, \edge xz\in E(\Gamma)$). Since $v,x,y,z$ do not span an In-Pendant (see Figure \ref{forbidden}), $y$ and $z$ may not be independent and thus $\edge yz\in E(\Gamma)$ ($\edge zy$ would have created a copy of $\vec C_3$). But now since $v,w,y,z$ do not span a Twisted Circle, $w$ and $z$ can not be independent and thus $\edge wz\in E(\Gamma)$. Which means that $z$ contributes to $O^A(v,w)$ as well.

Finally, let us note that $x$ itself contributes to $O^A(v,w)$. This completes the proof of \refeq{increment} and gives the desired contradiction with the criticality of $\edge vw$.
\end{proof}

In what follows, we argue by contradiction, i.e. we assume that $\alpha(v)>\frac 13$ for all $v\in V(\Gamma)$.

\begin{claim} \label{p3a}
For any critical edge $\edge vw$, $\vec P_3^A(v,w)>0$.
\end{claim}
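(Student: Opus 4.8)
The plan is to argue by contradiction: assume $\vec P_3^A(v,w)=0$ and derive, via the inductive hypothesis applied to $\Gamma|_{A(v)}$, that $w$ has outdegree far below $\frac{n-1}{3}$, contradicting the standing assumption $\alpha(w)>\frac13$.

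First I would translate the hypothesis $\vec P_3^A(v,w)=0$ into a clean combinatorial statement. By definition it asserts that there is no free vertex $x$ for which $\Gamma|_{\{v,w,x\}}$ (with $v,w$ playing the roles of the two labelled vertices) is isomorphic to $\vec P_3^A$, i.e. no $x$ with $\edge wx\in E(\Gamma)$ and $v,x$ independent. Now fix any out-neighbour $x$ of $w$; then $x\neq v$ (there is no edge anti-parallel to $\edge vw$), and the edge $\edge xv$ cannot be present, since $v\to w\to x\to v$ would be a copy of $\vec C_3$. Hence ``$v,x$ not independent'' is the same as $\edge vx\in E(\Gamma)$, and so $\vec P_3^A(v,w)=0$ is equivalent to the inclusion $A(w)\subseteq A(v)$.

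Under this inclusion, the out-neighbours of $w$ inside $\Gamma|_{A(v)}$ are exactly all of $A(w)$, so $\od_{\Gamma|_{A(v)}}(w)=\od_\Gamma(w)$ (equivalently, $O^A(v,w)\cdot(n-2)=|A(w)|$). Next I would invoke induction: $\Gamma|_{A(v)}$ is a \emph{proper} induced subgraph of $\Gamma$ (since $v\notin A(v)$), so the CH-conjecture holds for it, giving a vertex of $A(v)$ whose outdegree in $\Gamma|_{A(v)}$ is at most $\frac{|A(v)|-1}{3}=\frac{\od_\Gamma(v)-1}{3}$. But criticality of $\edge vw$ means precisely that $w$ minimizes the outdegree over $\Gamma|_{A(v)}$, so $\od_\Gamma(w)=\od_{\Gamma|_{A(v)}}(w)\le\frac{\od_\Gamma(v)-1}{3}\le\frac{n-2}{3}$, whence $\alpha(w)=\frac{\od_\Gamma(w)}{n-1}<\frac13$ --- contradicting the standing assumption. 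This finishes the proof.

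I do not expect a real obstacle here: the entire point is to notice that $\vec P_3^A(v,w)=0$ is just the assertion $A(w)\subseteq A(v)$, and that, once this holds, criticality turns the inductive CH-bound inside $A(v)$ into a bound on the genuine outdegree $\od_\Gamma(w)$. The only step needing a moment's care is the identification $\od_{\Gamma|_{A(v)}}(w)=\od_\Gamma(w)$, which is exactly what makes the induction bite; note in passing that, in contrast with Claim \ref{ohata}, this argument uses only $\vec C_3$-freeness and none of the three forbidden orgraphs.
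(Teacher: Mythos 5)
Your proof is correct and is essentially the same argument as the paper's, just phrased in raw combinatorial terms rather than in flag densities: the paper first writes $\alpha(w)=\frac{n-2}{n-1}\bigl(O^A(v,w)+\vec P_3^A(v,w)\bigr)$ (your observation that any out-neighbour of $w$ is either a common out-neighbour or independent from $v$), then applies the inductive CH-bound to $\Gamma|_{A(v)}$ together with criticality to get $O^A(v,w)\le\frac13\alpha(v)\le\frac13$, and concludes from $\alpha(w)>\frac13$ that $\vec P_3^A(v,w)>0$. Your observation that the claim uses only $\vec C_3$-freeness and none of the forbidden $4$-vertex orgraphs is also accurate.
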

\begin{proof}
Note first that $\alpha(w)=\frac{n-2}{n-1}\of{O^A(v,w)+\vec P_3^A(v,w)}$. Next, we can apply the inductive assumption to the set of all out-neighbors of $v$. Since $w$ was chosen to have the minimal outdegree in $\Gamma|_{A(v)}$, $O^A(v,w)\leq \frac 13\alpha(v)\leq\frac 13$. The claim follows immediately since $\alpha(w)> 1/3$ by the assumption we have just made.
\end{proof}

Now we study critical paths of length 2.

\begin{claim} \label{p3}
If $\edge uv$ and $\edge vw$ are critical edges then $u$ and $w$ are independent.
\end{claim}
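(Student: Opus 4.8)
The plan is to argue by contradiction: suppose $\edge uv$ and $\edge vw$ are both critical edges but $u$ and $w$ are \emph{not} independent. Since $\Gamma$ is $\vec C_3$-free and $\edge uv,\edge vw\in E(\Gamma)$, the edge $\edge wu$ would close a copy of $\vec C_3$, so the only possibility is $\edge uw\in E(\Gamma)$. Thus $u,v,w$ span a transitive triangle, and in particular $w\in A(u)$. I would now compare $O^A(u,w)$ with $O^A(u,v)$ and aim to show $O^A(u,w)<O^A(u,v)$, contradicting criticality of $\edge uv$ (since $w$ is an out-neighbor of $u$ whose outdegree in $\Gamma|_{A(u)}$ would then be strictly smaller than that of $v$).

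To do this, I would take an arbitrary vertex $z$ contributing to $O^A(u,w)$ — i.e. $\edge uz,\edge wz\in E(\Gamma)$ — and show it also contributes to $O^A(u,v)$, i.e. that $\edge vz\in E(\Gamma)$. Consider the four vertices $u,v,w,z$. We know $\edge uv,\edge vw,\edge uw,\edge uz,\edge wz$ are edges; the only unknown is the pair $\{v,z\}$. If $\edge zv\in E(\Gamma)$ then $v,w,z$ would form a $\vec C_3$ (using $\edge vw,\edge wz$... wait, we need $\edge zv$, $\edge vw$ and $\edge wz$ — but $\edge wz$ points the wrong way), so instead $\edge zv$ with $\edge vw$: the triangle $z\to v\to w$ needs $\edge wz$, which holds, so $\{z\to v\to w\to z\}$ is a $\vec C_3$; hence $\edge zv\notin E(\Gamma)$. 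So either $\edge vz\in E(\Gamma)$ (done), or $v$ and $z$ are independent. In the latter case I would invoke one of the forbidden orgraphs on Figure~\ref{forbidden} applied to $u,v,w,z$: with the known edges $\edge uv,\edge vw,\edge uw,\edge uz,\edge wz$ and $v,z$ independent, the induced suborgraph on $\{u,v,w,z\}$ is forced, and I expect it to be (or to contain, after checking the orientation of the remaining pair $\{u,\cdot\}$) a copy of the Out-Pendant or In-Pendant, giving a contradiction. Finally, as in Claim~\ref{ohata}, I would note that $v$ itself contributes to $O^A(u,v)$ (trivially, since $\edge uv$ is the base edge and we compare out-degrees within $A(u)$ — more precisely $v\notin A(u)$ in the relevant sense, so one must be slightly careful here), which would complete the strict inequality.

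The main obstacle I anticipate is bookkeeping the case analysis cleanly: the configuration $u,v,w,z$ has exactly one genuinely free pair once we fix $v,z$ independent, and I need to verify that \emph{every} completion of that pair yields either a $\vec C_3$ or one of the three forbidden induced orgraphs — there is no room to spare, so the choice of which Figure~\ref{forbidden} graph to cite must match the orientations exactly. A secondary subtlety is making sure the ``$x$ itself contributes'' step is stated correctly in the two-edges setting (comparing $O^A(u,v)$ and $O^A(u,w)$ as out-degrees inside $\Gamma|_{A(u)}$), since unlike Claim~\ref{ohata} we are moving along a path rather than staying at a single critical edge. Once the case analysis is pinned down, the rest is a short deduction from criticality of $\edge uv$.
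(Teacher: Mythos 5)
Your approach breaks down at the decisive case analysis. Having fixed $z$ with $\edge uz,\edge wz\in E(\Gamma)$, you correctly rule out $\edge zv$ (it would close the $\vec C_3$ on $z,v,w$), and you then hope that the remaining case, $v$ and $z$ independent, forces a forbidden induced orgraph on $\{u,v,w,z\}$. It does not. In that case the quadruple carries the five edges $\edge uv,\edge vw,\edge uw,\edge uz,\edge wz$ with exactly one independent pair $\{v,z\}$, and contains no $\vec C_3$; but every orgraph on Figure~\ref{forbidden} has four edges and two independent pairs, so none of them can be induced here. Hence you cannot conclude $\edge vz\in E(\Gamma)$, the containment of the sets counted by $O^A(u,w)$ and $O^A(u,v)$ fails, and the inequality $O^A(u,w)<O^A(u,v)$ that you want against the criticality of $\edge uv$ is never established. (There is also a small slip in the strictness step: the extra witness would be $w$, not $v$; $v$ is a labelled vertex of the flag and never contributes to $O^A(u,v)$, whereas $w$ does once $\edge uw$ is assumed. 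But this is moot given the earlier failure.)

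The missing ingredient is that a useful fourth vertex cannot be conjured out of the combinatorics of $\{u,v,w\}$ alone; it has to be supplied externally. The paper first invokes Claim~\ref{p3a}, whose proof uses the standing hypothesis $\alpha>1/3$ and the inductive assumption applied to $A(v)$, to produce a vertex $x$ with $\edge vx\in E(\Gamma)$ and $u,x$ independent. On $\{u,v,w,x\}$ the Out-Pendant forces $w,x$ to be adjacent, and the orientation $\edge xw$ is excluded because it would realize $\widehat O^A(u,v)>0$ and contradict Claim~\ref{ohata}. With $\edge wx\in E(\Gamma)$ in hand, the paper then runs a containment argument to get $O^A(v,x)<O^A(v,w)$, contradicting the criticality of the \emph{second} edge $\edge vw$ rather than the first, and in that configuration the forbidden-subgraph bookkeeping genuinely closes. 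Without producing some such $x$, a purely local comparison of $O^A(u,v)$ and $O^A(u,w)$ does not work.
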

\begin{proof}
Assume the contrary, that is $\edge uw\in E(\Gamma)$. By Claim \ref{p3a}, there exists $x$ such that $\edge vx\in E(\Gamma)$ while $u$ and $x$ are independent. Since $(u,v,w,x)$ do not induce an Out-Pendant, $w$ and $x$ may not be independent, and the edge $\edge xw$ is ruled out by Claim \ref{ohata}. Therefore, $\edge wx\in E(\Gamma)$.

And now we use the criticality of $\edge vw$, and our goal, like in the proof of Claim \ref{ohata}, is to arrive at a contradiction by establishing \refeq{increment}. We again choose $z$ with $\edge vz, \edge xz\in E(\Gamma)$. The edge $\edge uz$ is again ruled out by Claim \ref{ohata} (applied to $\{u,v,x,z\}$), therefore $u$ and $z$ must be independent.

And now $w,z$ may not be independent (since otherwise $\{u,v,w,z\}$ would have formed an Out-Pendant). Therefore, $\edge wz\in E(\Gamma)$, and the rest of the proof is the same as in the proof of Claim \ref{ohata}.
\end{proof}

The flag $\vec K_{2,1}^N$ is shown on Figure \ref{misc}.

\begin{claim} \label{nok21}
If $\edge uv$ and $\edge vw$ are critical edges then $\vec K^N_{2,1}(u,w)=0$.
\end{claim}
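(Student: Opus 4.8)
The plan is to argue by contradiction: suppose some vertex $x$ satisfies $\edge ux, \edge wx \in E(\Gamma)$ and derive a contradiction. First I would record the trivial facts that $x \notin \{u,v,w\}$ (otherwise one of $\edge ux, \edge wx$ is a loop or anti-parallel to $\edge uv$ or $\edge vw$) and that $u$ and $w$ are independent by Claim \ref{p3}; thus $\{u,v,w,x\}$ is a set of four distinct vertices, and I will analyse the orgraph it induces. Next I would pin down the edge between $v$ and $x$: the edge $\edge xv$ is excluded because $x \to v \to w \to x$ would be a copy of $\vec C_3$, so either $v$ and $x$ are independent or $\edge vx \in E(\Gamma)$.

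In the first case ($v$ and $x$ independent) the edges among $u,v,w,x$ are exactly $\edge uv, \edge vw, \edge wx, \edge ux$ and the non-edges are exactly $\{u,w\}$ and $\{v,x\}$. I would then observe that this is precisely an isomorphic copy of the Twisted Circle from Figure \ref{forbidden}: identify $u,v,w,x$ with its four vertices so that $u\to v\to w\to x$ is the directed path of length three and $\edge ux$ is the remaining edge; the two non-edges then correspond automatically. Since $\Gamma$ has no induced Twisted Circle, this case cannot occur.

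In the remaining case $\edge vx \in E(\Gamma)$, so the induced orgraph on $\{u,v,w,x\}$ has the five edges $\edge uv, \edge vw, \edge vx, \edge wx, \edge ux$ and the single non-edge $\{u,w\}$; being none of the three orgraphs on Figure \ref{forbidden}, it forces me to use the earlier claims rather than a bare forbidden-subgraph appeal, and I expect this to be the only real point of the proof. The key observation is that this configuration is an induced copy of the flag $\widehat O^A$ rooted at the critical edge $\edge uv$: under the identification $u \mapsto 1,\ v \mapsto 2,\ w \mapsto 3,\ x \mapsto 4$ the edges $\edge uv, \edge vw, \edge ux, \edge vx, \edge wx$ match the edges $\edge 12, \edge 23, \edge 14, \edge 24, \edge 34$ of $\widehat O^A$ and the non-edge $\{u,w\}$ matches its non-edge $\{1,3\}$. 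Hence $\widehat O^A(u,v) > 0$, contradicting Claim \ref{ohata} applied to the critical edge $\edge uv$. This exhausts all cases and proves $\vec K^N_{2,1}(u,w) = 0$.

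The only thing demanding care is the case bookkeeping — in each branch checking exactly which of the six pairs among $\{u,v,w,x\}$ is an edge (and in which direction), which is a non-edge, and which would be a loop or an anti-parallel pair — together with correctly matching the resulting four-vertex orgraph with the Twisted Circle in the first case and with $\widehat O^A$ in the second. Once those identifications are in place the contradictions are immediate from $\vec C_3$-freeness, the hypothesis that $\Gamma$ omits the three orgraphs of Figure \ref{forbidden} as induced subgraphs, and Claims \ref{p3} and \ref{ohata}; the criticality of $\edge vw$ enters only through Claim \ref{p3}.
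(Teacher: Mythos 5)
Your proposal is correct and follows essentially the same route as the paper's (one-sentence) proof: note $u,w$ are independent via Claim \ref{p3}, rule out $\edge xv$ by $\vec C_3$-freeness, rule out $v,x$ independent via the Twisted Circle, and rule out $\edge vx$ by recognizing an induced $\widehat O^A$ rooted at the critical edge $\edge uv$, contradicting Claim \ref{ohata}. The paper simply states these three exclusions without the explicit label-by-label matching you supply, but the logic is identical.
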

\begin{proof}
Assume the contrary, and let $x$ be any vertex such that $\edge ux, \edge wx\in E(\Gamma)$. Then $v$ and $x$ can not be independent (as it would have created a Twisted Circle), $\edge xv$ can not be an edge since it would have created $\vec C_3$ and $\edge vx$ can not be an edge by Claim \ref{ohata}.
\end{proof}

\begin{claim} \label{induction}
If $\edge uv$ and $\edge vw$ are critical edges then
\begin{equation}\label{required}
    3O^A(u,v) \leq \vec P_3^N(u,w) -\frac 1{n-2}.
\end{equation}
\end{claim}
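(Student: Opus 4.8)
The plan is to count, over all out-neighbours $v$ of $u$, the contribution of each such $v$ to the various flag densities at $(u,w)$, and then to combine these counts with the inductive hypothesis applied to a suitably chosen induced subgraph. Fix critical edges $\edge uv$ and $\edge vw$. By Claim \ref{p3}, $u$ and $w$ are independent, so $(\Gamma, \theta)$ with $\theta(1)=u,\ \theta(2)=w$ is an $N$-flag, and the quantities $O^A(u,v)$, $\vec P_3^N(u,w)$ make sense as densities measured against $n-2$ free vertices (after removing $u,w$). First I would rewrite both sides of \refeq{required} as counts of vertices: $(n-1)\,O^A(u,v)$ is the number of common out-neighbours of $u$ and $v$, i.e.\ the out-degree of $v$ inside $A(u)$, while $(n-2)\,\vec P_3^N(u,w)$ counts vertices $x\notin\{u,w\}$ forming the relevant path pattern with $u$ and $w$.

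The key step is to produce, for each vertex $z$ that contributes to $O^A(u,v)$ (that is, $\edge uz,\edge vz\in E(\Gamma)$), a corresponding contribution to $\vec P_3^N(u,w)$, with the right multiplicity. I expect the argument to run exactly parallel to Claims \ref{ohata}--\ref{nok21}: given such a $z$, one first locates where $z$ sits relative to $w$. The edge $\edge zw$ is excluded because it would create $\vec C_3$ together with $\edge wv\dots$ — more precisely, one uses that $\{u,v,w,z\}$ cannot induce any of the three forbidden orgraphs, together with $\vec C_3$-freeness and Claim \ref{ohata}, to pin down the orientation of the pair $\{w,z\}$ (or to show $w,z$ independent). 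The factor of $3$ on the left-hand side strongly suggests that each $z$ contributing to $O^A(u,v)$ is made to correspond to three distinct vertices contributing to $\vec P_3^N(u,w)$, or that one applies the inductive CH bound to the out-neighbourhood $A(u)$ (of size $(n-1)\alpha(u) > (n-1)/3$) to replace $O^A(u,v)$, which has been chosen minimal, by $\tfrac13$ of something — this is the mechanism already used in Claim \ref{p3a}, where criticality of $\edge uv$ gave $O^A(u,v)\le\tfrac13\alpha(u)$.

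So the skeleton I would write is: (i) apply the induction hypothesis to $\Gamma|_{A(u)}$ to get $O^A(u,v)\le \tfrac13\cdot\frac{|A(u)|-1}{n-1}$, i.e.\ $3(n-1)O^A(u,v)\le |A(u)|-1$; (ii) show, using Claims \ref{ohata}, \ref{p3}, \ref{nok21} and forbidden-subgraph inspection on $\{u,v,w,z\}$ and $\{u,w,z\}$, that every vertex of $A(u)\setminus\{v\}$ — with at most one exception — contributes to $\vec P_3^N(u,w)$, and conversely that $v$ itself, or $w$, accounts for the $-\tfrac1{n-2}$ slack; (iii) assemble the inequalities, being careful that the normalisations ($n-1$ for $O^A$ relative to the type-$A$ flag at $(u,v)$, versus $n-2$ for $\vec P_3^N$ at $(u,w)$) match up, which is where the additive $\frac1{n-2}$ correction term comes from. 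The main obstacle I anticipate is the bookkeeping in step (ii): verifying that the correspondence $z\mapsto(\text{witnesses for }\vec P_3^N)$ is injective with the claimed multiplicity, and correctly identifying the single exceptional vertex responsible for the $-\frac1{n-2}$ term — this is exactly the point where the author warns that ``lower-order terms do begin to accumulate,'' so the count must be done exactly rather than asymptotically.
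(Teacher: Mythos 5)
Your high-level instinct is right --- apply the inductive CH hypothesis to some induced subgraph of $\Gamma$ and use criticality of $\edge uv$ to transfer the resulting bound to $v$ --- but you apply induction to the wrong subgraph, and the bridging step (ii) that this forces you into is false. You propose applying CH to $\Gamma|_{A(u)}$ to get a vertex of out-degree at most $\frac{|A(u)|-1}{3}$ in $A(u)$; criticality then gives $(n-2)O^A(u,v)\leq\frac{|A(u)|-1}{3}$. To deduce \eqref{required} from this you would need $|A(u)|-1\leq(n-2)\vec P_3^N(u,w)$, i.e.\ that essentially every $z\in A(u)$ also satisfies $\edge zw\in E(\Gamma)$. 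That is not true. Claim \ref{nok21} does rule out $\edge wz$ for $z\in A(u)$, but it does not rule out $z$ and $w$ being independent, and indeed in the extremal circle $\Gamma_0$ most of $A(u)$ is independent from any fixed $w\in A(v)$. The containment you have available, $U\subseteq A(u)$ where $U$ is the set of $\vec P_3$-witnesses between $u$ and $w$, goes in exactly the wrong direction for your inequality.

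The paper's proof escapes this by applying induction to the \emph{smaller} subgraph $\Gamma|_U$ (note $v\in U$, so $U\neq\emptyset$), obtaining $v^\ast\in U$ of out-degree at most $\frac{|U|-1}{3}$ inside $\Gamma|_U$. The crucial structural step --- which you correctly anticipate must hinge on an Out-Pendant inspection together with Claim \ref{nok21} --- is that every $x$ contributing to $O^A(u,v^\ast)$ in fact lies in $U$: if $w,x$ were independent then $\{u,v^\ast,w,x\}$ would induce an Out-Pendant (here the edge $\edge{v^\ast}{w}$, available precisely because $v^\ast\in U$, is indispensable), and $\edge wx$ is excluded by Claim \ref{nok21} applied to $(u,v,w)$. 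Hence the out-degree of $v^\ast$ inside $A(u)$ equals its out-degree inside $U$, and criticality of $\edge uv$ gives $O^A(u,v)\leq O^A(u,v^\ast)\leq\frac{|U|-1}{3(n-2)}$, which is \eqref{required}. So the correct choice of subgraph to induct on is $U$, not $A(u)$; inducting on $A(u)$ produces a bound that is too weak by the factor $|A(u)|/|U|$. (There is also a minor normalization slip in your write-up: the out-degree of $v$ in $A(u)$ equals $(n-2)O^A(u,v)$, not $(n-1)O^A(u,v)$, since the density $p(F,F')$ for $F\in\scr F^A_3$ is taken over the $n-2$ free vertices.)
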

\begin{proof}
Let $\vec P_3^N(u,w)=\frac h{n-2}$, and let $U\ni v$ be the corresponding set of vertices, $|U|=h$. Applying to $\Gamma|_U$ our inductive assumption, we find a vertex $v^\ast\in U$ that has degree $\leq \frac{h-1}{3}$ in $\Gamma|_U$ (possibly, $v^\ast=v$). We will prove that
\begin{equation} \label{allin}
    O^A(u,v^\ast)\leq \frac{h-1}{3(n-2)}
\end{equation}
from which the claim follows since $O^A(u,v)\leq O^A(u, v^\ast)$ due to the criticality of $\edge uv$.

To prove \refeq{allin}, it suffices to show that every vertex $x$ contributing to $O^A(u, v^\ast)$ in fact belongs to $U$, that is $\edge xw\in E(\Gamma)$. But $w,x$ can not be independent (since otherwise we would get a copy of an Out-Pendant), and the edge $\edge wx$ is ruled out by Claim \ref{nok21} (note that we must apply this claim to the triple $(u,v,w)$, {\em not} $(u, v^\ast,w)$, as we do not know that $\edge u{v^\ast}$ is critical!).
\end{proof}

The following is our crucial claim that is a typical computer-assisted calculation in flag algebras, albeit much simpler than in all previous applications of the method. For an explanation of all new flags appearing in its statement and proof, we refer to Figure \ref{misc}.
\begin{claim} \label{crucial}
If $\edge uv$ and $\edge vw$ are critical edges, then
\begin{equation}\label{crucial:eqn}
\longeq{ &&   \alpha(u)+\alpha(v)+\alpha(w) +(O^A(u,v)+I^A(u,v)+\vec K_{2,1}^A(u,v))\\&&\longeqskip - (O^A(v,w)+I^A(v,w)+\vec K_{2,1}^A(v,w))\leq 1.}
\end{equation}
\end{claim}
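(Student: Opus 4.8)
The plan is to read off \refeq{crucial:eqn} from a classification of the free vertices according to their attachment to the labelled triple $u,v,w$, and then to combine that classification with the already established Claims \ref{ohata}--\ref{induction} in a single linear (LP-style) computation, working with exact counts rather than passing to a limit. By Claim \ref{p3} the triple $u,v,w$ induces the type $P$, so $(\Gamma,\theta)$ with $\theta(1)=u,\theta(2)=v,\theta(3)=w$ is a $P$-flag. For each free vertex $x$ and each $a\in\{u,v,w\}$ one records whether $\edge ax$, $\edge xa$, or neither holds; this sorts $V(\Gamma)\setminus\{u,v,w\}$ into at most $27$ classes, those creating a $\vec C_3$ through $\edge uv$ or $\edge vw$ being empty. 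Each of the nine densities in \refeq{crucial:eqn} is, after subtracting the explicit contribution of the labelled vertices themselves, a non-negative combination of class sizes divided by the appropriate normaliser ($n-1$ for the three $\alpha$'s, $n-2$ for the $3$-vertex flags): $O^A(\cdot,\cdot)$ sees the class ``out, out'', $I^A$ the class ``in, in'', $\vec K_{2,1}^A$ the class ``non-adjacent, in'', and the $\alpha$'s count out-neighbours. Thus \refeq{crucial:eqn} becomes a single linear inequality in the class sizes, up to a handful of $O(1/n)$ terms.

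Next I would discard the classes for which $\{u,v,w,x\}$ induces a Twisted Circle, an In-Pendant or an Out-Pendant (impossible by hypothesis on $\Gamma$), and, via the counting form of Claim \ref{nok21}, the classes in which $x$ is a common out-neighbour of $u$ and $w$. For every remaining class one computes its coefficient in the linear form; all of them turn out to be $\le 1$, with the single exception of the class ``$u\to x$, $v\to x$, $x\to w$'', which is precisely the class that supports $O^A(u,v)$ and which gets coefficient $3$. This is exactly where Claim \ref{induction} is used: expanding $\vec P_3^N(u,w)$ in the same classes, its counting form says that this problematic class has size at most half the combined size of the two neighbouring classes ``$u\to x$, $x\to v$, $x\to w$'' and ``$u\to x$, $x$ independent of $v$, $x\to w$'', both of which carry coefficient $0$; substituting this inequality pulls every effective coefficient down to $\le 1$. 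The other miscellaneous flags of Figure \ref{misc} (notably $\vec K_{1,2}^A$ and $\vec P_3^A$, the latter via Claim \ref{p3a}) enter the same bookkeeping, and with all effective coefficients now at most $1$ the main-order estimate closes.

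The last and most delicate point --- which I expect to be the real obstacle --- is the accounting of the lower-order terms. The $\alpha$'s carry denominator $n-1$ while the $3$-vertex flags carry $n-2$; there is a surplus ``$+2$'' coming from the two edges $\edge uv,\edge vw$ themselves; and the constant hidden in the $-\frac1{n-2}$ of Claim \ref{induction} must be reconciled with the leading term ``$1$'' of $\vec P_3^N(u,w)$. These $\Theta(1/n)$ contributions do not cancel formally, and one or two otherwise harmless classes (roughly speaking, common in-neighbours of $u,v$ that are independent of $w$, and the analogous ``non-adjacent, in, non-adjacent'' class) still leave a small positive residue at this stage. Resolving this uses the standing assumption that every out-degree exceeds $(n-1)/3$, possibly in conjunction with one further short inductive observation of the same flavour as Claims \ref{ohata}--\ref{induction}, to manufacture exactly the missing slack. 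As the authors themselves remark, this is the point at which the lower-order terms begin to accumulate, and the whole burden of the argument is to verify that they stay within the $\Theta(1)$ margin until the computation terminates.
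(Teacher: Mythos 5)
Your main-order argument is exactly the paper's: classify each free vertex by its relationship to the labelled triple $u,v,w$ (the paper phrases this as ``each individual $x$ contributes at most $1$''); discard the attachment patterns that produce a Twisted Circle, In-Pendant or Out-Pendant, and (via Claim~\ref{nok21}) the common out-neighbours of $u,w$; observe that after these exclusions the only surviving class whose coefficient in \refeq{crucial:eqn} exceeds $1$ is $\{x: \edge ux,\edge vx,\edge xw\}$, which is $O^A(u,v)$ with coefficient $3$; and then bring in Claim~\ref{induction}, whose counting form $2|C_0|\le |C_1|+|C_2|-1$ (with $C_1,C_2$ the other two classes supporting $\vec P_3^N(u,w)$, both of coefficient $0$) kills the excess. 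The paper performs this subtraction of Claim~\ref{induction} up front, producing \refeq{main_computation}, rather than as a ``substitution'' at the end, but that is a cosmetic difference.

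Where your proposal falls short is the last paragraph, and it is the gap that matters. You correctly sense that the $\Theta(1/n)$ terms are delicate, but you leave them unresolved and hedge that the argument ``possibly'' requires ``one further short inductive observation''. It does not, and a proof cannot be left in that state. The paper closes the accounting with nothing beyond Claims~\ref{ohata}--\ref{induction} and the standing contradiction hypothesis $\alpha(u)+\alpha(v)+\alpha(w)>1$. Concretely: defining tilde densities with the random vertex $\mathbf x$ drawn from $V(\Gamma)\setminus\{u,v,w\}$, the per-vertex bound \refeq{flag} has right-hand side exactly~$1$; one then multiplies by $\tfrac{n-3}{n-2}$ and adds $\tfrac1{n-1}$ to both sides, using the identities
\begin{align*}
\alpha(u)&=\tfrac{n-3}{n-1}\widetilde\alpha(u)+\tfrac1{n-1}, &
\alpha(v)&=\tfrac{n-3}{n-1}\widetilde\alpha(v)+\tfrac1{n-1}, &
\alpha(w)&=\tfrac{n-3}{n-1}\widetilde\alpha(w),
\end{align*}
$\vec P_3^N(u,w)=\tfrac{n-3}{n-2}\widetilde{\vec P_3^N}(u,w)+\tfrac1{n-2}$ (the $+\tfrac1{n-2}$ comes from $x=v$), and $F(y,z)=\tfrac{n-3}{n-2}\widetilde F(y,z)$ for every other three-vertex term. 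This yields \refeq{intermediate} with right-hand side~$1$ on the nose. Finally $\alpha(u)+\alpha(v)+\alpha(w)>1$ gives $\tfrac{n-3}{n-2}(\widetilde\alpha(u)+\widetilde\alpha(v)+\widetilde\alpha(w))\ge\alpha(u)+\alpha(v)+\alpha(w)-\tfrac1{n-2}$, and adding Claim~\ref{induction} back in recovers \refeq{crucial:eqn}. So the slack you were worried about is manufactured entirely by the $>\tfrac13$ hypothesis; no new lemma is needed, and your speculation about residual classes such as ``$(-,-,0)$'' is a red herring (those classes have coefficient exactly~$1$). Two further small inaccuracies: $\vec K_{1,2}^A$ and $\vec P_3^A$ do not enter the bookkeeping of this claim (Claim~\ref{p3a} is used only upstream to prove Claim~\ref{p3}), and of the two ``neighbouring classes'' supporting $\vec P_3^N(u,w)$ you should note that $(+,-,-)$ is genuinely possible (it is not a $\vec C_3$ through $\edge uv$), so both contribute.
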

\begin{proof}
Subtracting the inequality in Claim \ref{induction} and re-grouping terms, it suffices to prove that
\begin{equation}\label{main_computation}
 \longeq{&&   \alpha(u)+\alpha(v)+\alpha(w) +(I^A(u,v)+\vec K_{2,1}^A(u,v)-2O^A(u,v))\\&&\longeqskip - (O^A(v,w)+I^A(v,w)+\vec K_{2,1}^A(v,w))+\vec P_3^N(u,w)\leq 1+\frac 1{n-2}.}
\end{equation}
Let us pick $\rn x\in V(\Gamma)\setminus \{u,v,w\}$ uniformly at random and let us re-calculate all quantities in the left-hand side of \refeq{main_computation} with respect to that distribution. More precisely, for $\{v_1,\ldots,v_k\}\subset \{u,v,w\}$ we replace the quantity $p(F,(\Gamma,\theta))$ in \eqref{relative} by the probability of the event $(\Gamma,\theta)|_{\{v_1,\ldots,v_k,\rn x\}}\approx F$ (thus, the only difference from the original definition is that now the random variable $\rn x$ is forbidden to take values from $\{u,v,w\}\setminus \{v_1,\ldots,v_k\}$).
Denoting these re-calculated quantities with $\widetilde\alpha(u),\ldots, \widetilde{{\vec P_3}^N}(u,w)$, we claim that
\begin{equation}\label{flag}
\longeq{&&\widetilde\alpha(u)+\widetilde\alpha(v)+\widetilde\alpha(w) +(\widetilde{I^A}(u,v)+\widetilde{\vec K_{2,1}^A}(u,v)-2\widetilde{O^A}(u,v))\\&&\longeqskip - (\widetilde{O^A}(v,w)+\widetilde{I^A}(v,w)+\widetilde{\vec K_{2,1}^A}(v,w))+\widetilde{\vec P_3^N}(u,w)\leq 1.}
\end{equation}
For that we prove that every {\em individual} $x\not\in \{u,v,w\}$ contributes at most 1 to the left-hand side\footnote{The reader familiar with the formalism of flag algebras may notice that we are simply proving the inequality $\sum_{i=1}^3\pi^{P,i}(\alpha) + \pi^{P,[1,2]}(O^A+I^A+\vec K_{2,1}^A) - \pi^{P,[2,3]}(O^A+I^A+\vec K_{2,1}^A) + \pi^{P,[1,3]}(\vec P_3^N)-3\pi^{P,[1,2]}(O^A)\leq 1$.}.

We can assume w.l.o.g. that $x$ contributes to at least two terms 
$$
\widetilde\alpha(u), \widetilde\alpha(v), \widetilde\alpha(w), \widetilde{I^A}(u,v), \widetilde{\vec K_{2,1}^A}(u,v), \widetilde{\vec P_3^N}(u,w),
$$ 
and we have to show that this excessive (over 1) contribution is compensated by the contribution of $x$ to negative terms.

Firstly we note that $x$ may contribute to at most one of the terms $\widetilde{I^A}(u,v), \widetilde{\vec K_{2,1}^A}(u,v), \widetilde{\vec P_3^N}(u,w)$, depending on whether $\edge xu\in E(\Gamma)$, $x,u$ are independent or $\edge ux\in E(\Gamma)$. Thus, we can also assume w.l.o.g. that $x$ contributes to at least one of $\widetilde\alpha(u), \widetilde\alpha(v), \widetilde\alpha(w)$. Which immediately implies that $x$ may not contribute to $\widetilde{I^A}(u,v), \widetilde{\vec K_{2,1}^A}(u,v)$: otherwise we would have had $\edge xv\in E(\Gamma)$, hence $\edge wx\not\in E(\Gamma)$ and $x$ would not have contributed to  $\widetilde\alpha(u) + \widetilde\alpha(v) + \widetilde\alpha(w)$.

Let us now consider the case when $x$ contributes to $\widetilde{\vec P_3^N}(u,w)$, i.e., $\edge ux, \edge xw\in E(\Gamma)$. If $\edge vx\not\in E(\Gamma)$, then $x$ does give an excessive contribution of one (to $\widetilde\alpha(u)$ and $\widetilde{\vec P_3^N}(u,w)$), but it is compensated by its contribution to one of the negative terms $\widetilde{I^A}(v,w), \widetilde{\vec K_{2,1}^A}(v,w)$. If, on the other hand, $\edge vx\in E(\Gamma)$, then the excessive contribution of two is offset by the term $2\widetilde{O^A}(u,v)$.

We are left with the case when $x$ does not contribute to $\widetilde{I^A}(u,v), \widetilde{\vec K_{2,1}^A}(u,v), \widetilde{\vec P_3^N}(u,w)$ at all which, in particular implies that it contributes to at least two terms  $\widetilde\alpha(u), \widetilde\alpha(v), \widetilde\alpha(w)$.

If $x$ contributes to both $\widetilde\alpha(u), \widetilde\alpha(v)$, then this contribution is again offset by $2\widetilde{O^A}(u,v)$. Thus, we can also assume that $x$ contributes to $\widetilde\alpha(w)$ and to {\em precisely} one of the terms $\widetilde\alpha(u), \widetilde\alpha(v)$. If $\edge vx\in E(\Gamma)$ then the excessive conribution is taken care of by the last remaining negative term $\widetilde{O^A}(v,w)$. And the case $\edge ux\in E(\Gamma),\ \edge vx\not\in E(\Gamma)$ is impossible since it gives rise to Twisted Circle.

The proof of \eqref{flag} is complete.

On the other hand, we have
\begin{eqnarray*}
\alpha(u) &=& \frac{n-3}{n-1}\widetilde\alpha(u)+ \frac 1{n-1};\\
\alpha(v) &=& \frac{n-3}{n-1}\widetilde\alpha(v)+ \frac 1{n-1};\\
\alpha(w) &=& \frac{n-3}{n-1}\widetilde\alpha(w);\\
\vec P_3^N(u,w) &=& \frac{n-3}{n-2}\widetilde{\vec P_3^N}(u,w)+ \frac 1{n-2};\\
F(y,z) &=& \frac{n-3}{n-2}\widetilde F(y,z)\ \text{for any other term}\ F(y,z)\ \text{in \refeq{main_computation}, \refeq{flag}.}
\end{eqnarray*}
Multiplying \refeq{flag} by $\frac{n-3}{n-2}$ and adding $\frac 1{n-1}$ to both sides, we get
\begin{equation}\label{intermediate}
    \longeq{&&\frac{n-3}{n-2}\of{\widetilde\alpha(u)+\widetilde\alpha(v)+\widetilde\alpha(w)} +({I^A}(u,v)+\vec K_{2,1}^A(u,v)-2{O^A(u,v)})\\&&\longeqskip - ({O^A}(v,w)+{I^A}(v,w)+{\vec K_{2,1}^A}(v,w))+{\vec P_3^N}(u,w)\leq 1.}
\end{equation}

Also, since $\alpha(u)+\alpha(v)+\alpha(w)> 1$ by our assumption, we have
\begin{eqnarray*}
&&\frac{n-3}{n-2}\of{\widetilde\alpha(u)+\widetilde\alpha(v)+\widetilde\alpha(w)} = \frac{n-1}{n-2}\of{\alpha(u)+\alpha(v)+\alpha(w)}-\frac 2{n-2}\\&&\longeqskip \geq \alpha(u)+\alpha(v)+\alpha(w)-\frac 1{n-2}.
\end{eqnarray*}
Substituting this into \refeq{intermediate} and summing the resulting inequality with \refeq{required} gives us \refeq{main_computation}.
\end{proof}

Now the proof of Theorem \ref{main} is completed by an easy averaging argument. Since for every vertex $v$ there exists at least one critical edge going out of $v$, there exists a cycle $C=(v_1,v_2,\ldots,v_\ell)$ consisting of critical edges. After summing up the inequalities \refeq{crucial:eqn} along this cycle, the terms $O^A(u,v),\ldots,\vec K_{2,1}^A(v,w)$ get canceled and we arrive at
$$
\sum_{i\in {\Bbb Z}_\ell}\alpha(v_i)\leq\ell/3.
$$
Therefore, there exists at least one $i$ with $\alpha(v_i)\leq 1/3$. Theorem \ref{main} is proved.

\bigskip
That would be interesting to improve upon our result by removing some (and preferably all) forbidden subgraphs on Figure \ref{forbidden}. We have tried it for a while, but all three constraints are very essential in our proof, and removing any one of them immediately creates a new level of difficulties that we have not been able to surpass.

\section*{Acknowledgment}

I am grateful to Jan Hladk\'y for several useful remarks. I am also indebted to both anonymous referees for reading the manuscript very carefully that has resulted in several important changes and corrections.

\section*{Added in proof}

Lichiardopol \cite{Lic} has given an affirmative answer to the question asked in Section \ref{intro}: the CH-conjecture holds for oriented graphs with independence number 2 (and without restriction of out-regularity).

\bibliographystyle{alpha}
\bibliography{\home razb}
\end{document}